\newtheorem{thm}{Theorem}
\newtheorem{lemma}[thm]{Lemma}
\newtheorem{prop}[thm]{Proposition}
\newtheorem{defi}[thm]{Definition}
{\theoremstyle{definition}
\newtheorem{exa}[thm]{Example}
\newtheorem{rem}[thm]{Remark}}
\newcommand{\C}{\mathbb{C}}
\renewcommand{\epsilon}{\varepsilon}
\newcommand{\R}{\mathbb{R}}
\newcommand{\Z}{\mathbb{Z}}
\newcommand{\CC}{\widetilde C}
\begin{document}

\title{Pseudoholomorphic simple Harnack curves}

\author{Erwan Brugall\'e}
\address{\'Ecole Polytechnique,
Centre Math\'ematiques Laurent Schwartz, 91 128 Palaiseau Cedex, France}

\email{erwan.brugalle@math.cnrs.fr}

\subjclass[2010]{Primary 14P25, 32Q65; Secondary 14P05}
\keywords{Pseudoholomorphic curves, Harnack curves}

\begin{abstract}
We give a new proof of Mikhalkin's Theorem on the
 topological classification  of 
simple Harnack curves, which in particular extends
 Mikhalkin's result to real pseudoholomorphic curves.
\end{abstract}
\maketitle

A non-singular (abstract) 
\emph{real algebraic curve} is a non-singular complex algebraic
curve $C$ equipped with an anti-holomorphic involution $conj_C$. The real 
part of $C$, denoted by $\R C$, is by definition the set of fixed
points of $conj_C$. If $C$ is compact, then $\R C $ is a disjoint
union of at most
$g(C)+1$ smooth circles, where $g(C)$ is the genus of $C$.
When $\R C$ has precisely $g(C)+1$ connected components, we say that the real
curve $C$ is \emph{maximal}. Equivalently, a real algebraic curve
 $C$ is maximal if and only the quotient $C/conj_C$ is a disk with
$g(C)$ holes
(see for example \cite{V3}).

A \emph{real  map} $\phi:C \to \C P^2$ from a real algebraic curve
 is a map such that $\phi\circ
conj_C=conj\circ \phi$, where $conj([x:y:z])=[\overline x:\overline
  y:\overline z]$ is the standard complex conjugation on $\C
P^2$. Note that $\phi(\R C)\subset \R\phi(C)$ if $\phi$ is real, however
this inclusion might be strict as $\phi$ may map pairs of
$conj_C$-conjugated points 
 to $\R P^2$.
Given $\phi:C \to \C P^2$ a real smooth map, 
a point $p\in \R \phi(C)$ is
called a \emph{solitary node} if there exists a neighborhood $U$ of
$p$ in $\R P^2$ such that  $\phi^{-1}(U)=\phi^{-1}(p)$ which in
addition consists of two 
$conj_C$-conjugated points
 at which the differential of $\phi$ is
injective (i.e. locally at $p$, $\phi(C)$ is the transverse
intersection of two complex conjugated disks).

\section{Introduction}\label{sec:intro}

Let $L_0,L_1,$ and $L_2$ be three distinct real lines in $\C P^2$ with
$L_0\cap L_1\cap L_2=\emptyset$.
A \emph{simple Harnack curve} is  a real algebraic
map
$\phi:C\to\C P^2$ 
satisfying the two following conditions:
\begin{itemize}
\item $C$ is a non-singular maximal real algebraic curve;
\item there exist a connected component 
 $\mathcal O$ of $\R C$, and three disjoint
  arcs $l_0,l_1,l_2$ contained in $\mathcal O$ such that $\phi^{-1}(L_i)\subset l_i$.
\end{itemize}

Note that by Bézout Theorem, the set $\phi^{-1}(L_i)$ contains
finitely many points.
We depict in Figure~\ref{fig:harnack} examples of simple Harnack
curves with a non-singular image in $\C P^2$ and 
intersecting transversely 
all lines $L_i$. 
Theorem~\ref{thm:harnack} below says that these are essentially the only
simple Harnack curves.
\begin{figure}[h!]
\centering
\begin{tabular}{ccc} 
\includegraphics[height=5cm, angle=0]{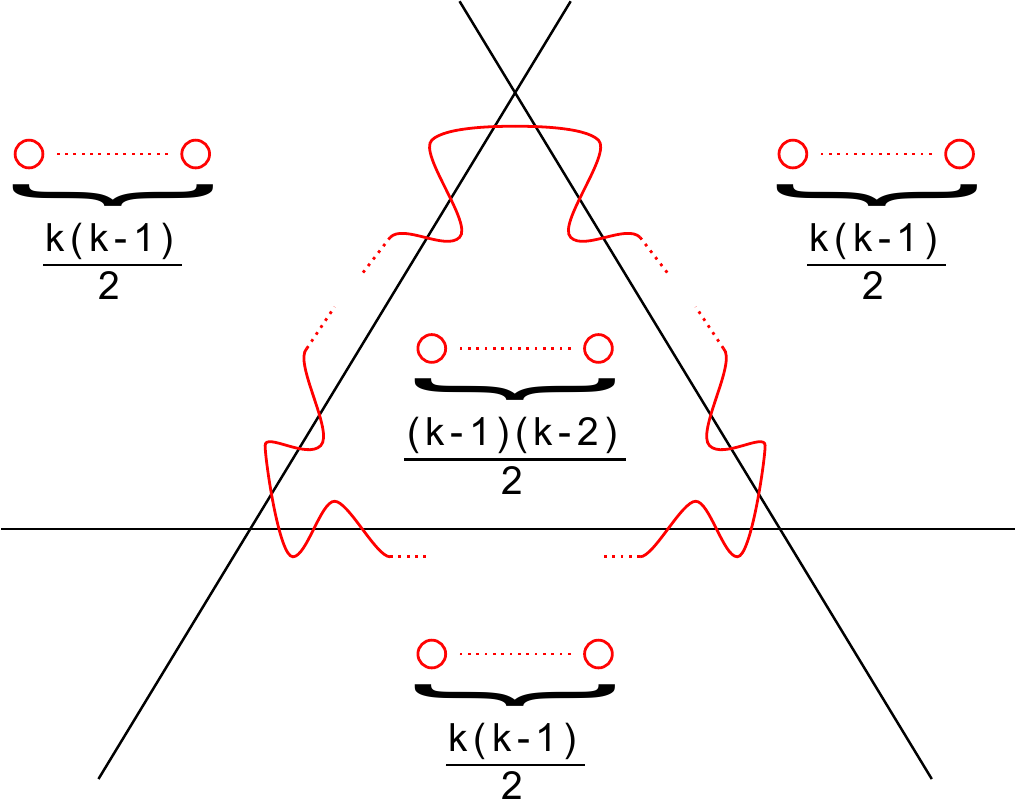}& \hspace{5ex} &
\includegraphics[height=5cm, angle=0]{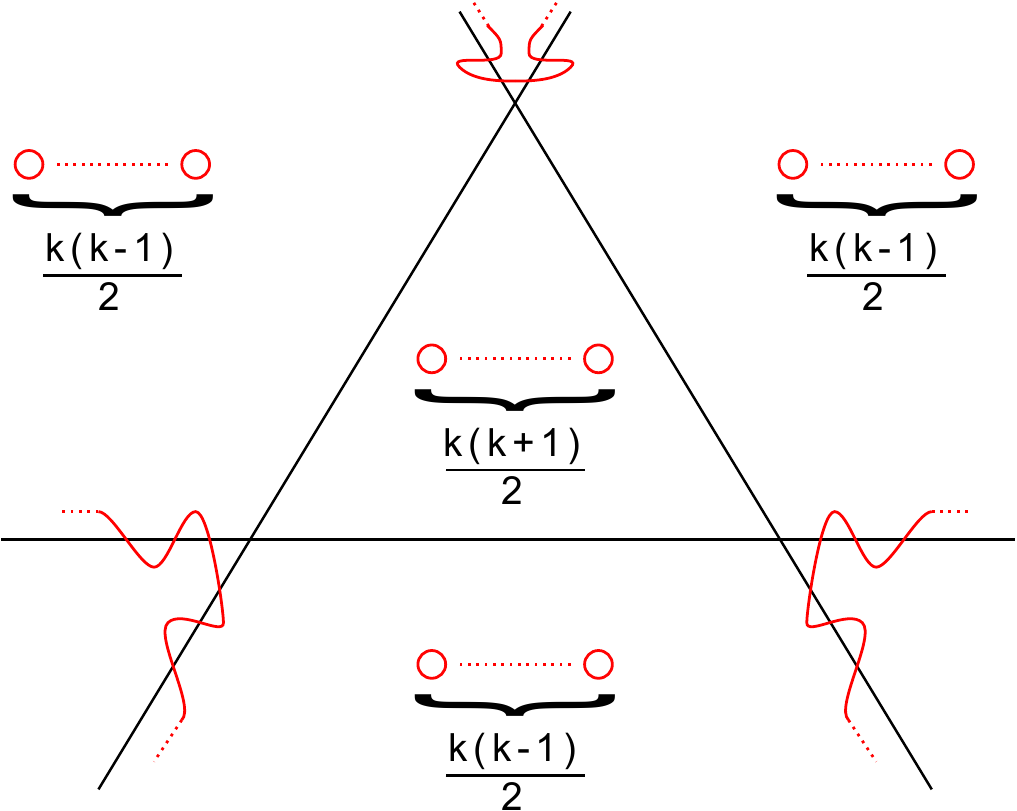}
\\ \\ a) $d=2k$ && b) $d=2k+1$
\end{tabular}
\caption{Simple Harnack curves of degree $d$ and genus
  $\frac{(d-1)(d-2)}{2}$; 
in particular three quadrants of $\R
  P^2\setminus (\bigcup_{i=0}^2)\R L_i$ contain $\frac{k(k-1)}{2}$
  circles in $\phi(\R C)$, 
while the fourth one contains either $\frac{(k-1)(k-2)}{2}$ or 
$\frac{k(k+1)}{2}$ such circles depending on the parity of $d$.}
\label{fig:harnack}
\end{figure}

Let $\phi:C\to \C P^2$ be a simple Harnack
curve,
and choose an orientation of $\mathcal O$. This induces an ordering of 
the intersection points  of $\mathcal O$ (or $C$) with  $L_i$, and 
we
denote by $s_i$ the corresponding sequence of
intersection multiplicities. Let $s$ be the sequence $(s_0,s_1,s_2)$
considered up to the equivalence relation generated by
$$
(s_0,s_1,s_2)\sim (\overline s_0,\overline s_1,\overline s_2),
\quad(s_0,s_1,s_2)\sim (s_2,s_0,s_1),\quad\mbox{and}\quad 
(s_0,s_1,s_2)\sim 
%( s_0,\overline s_2,\overline s_1),$$
( s_0,s_2, s_1),$$
where $\overline{(u_i)_{1\le i\le n}}=(u_{n-i})_{1\le i\le n}$.
This equivalence relation is so that 
  $s$ does not depend on the chosen orientation on
$\mathcal O$, nor on the labeling of the three lines $L_i$.

\begin{thm}[Mikhalkin \cite{Mik11}, Mikhalkin-Rullg{\aa}rd
    \cite{MikRul01}]\label{thm:harnack} 
Let $\phi:C\to\C P^2$ be a simple Harnack curve of degree $d$, and
suppose that $\phi(C)$ is the limit of images of  a sequence of 
 simple Harnack curves of degree $d$ and genus  $g(C)=\frac{(d-1)(d-2)}{2}$.
 Then 
the curve $\phi(C)$ 
has  solitary nodes 
 as only singularities (if any).
Moreover if either $g(C)=0$ or $g(C)=\frac{(d-1)(d-2)}{2}$, then
the topological type of the
pair $\left(\R P^2, \R \phi(C)\bigcup_{i=0}^2\R L_i\right)$ only
depends on $d$, 
$g(C)$, and $s$.
\end{thm}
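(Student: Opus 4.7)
\bigskip

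The plan is to split the proof into two main stages: first handle the smooth maximal case where $g(C)=\frac{(d-1)(d-2)}{2}$ and $\phi(C)$ is non-singular, and then reduce the general (singular) case to this one by using the assumption that $\phi(C)$ is a limit of smooth simple Harnack curves.

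\emph{Stage 1 (smooth case).} Assume $\phi(C)$ is non-singular of degree $d$ and genus $\frac{(d-1)(d-2)}{2}$, so $\phi$ is an embedding and $\R\phi(C)$ consists of $\frac{(d-1)(d-2)}{2}+1$ disjoint ovals in $\R P^2$. One of these is $\phi(\mathcal O)$. I would first determine the isotopy class of $\phi(\mathcal O)\cup\bigcup_i\R L_i$: the complement $\R P^2\setminus\bigcup_i\R L_i$ has four connected components, and the data of $s$ dictates the cyclic sequence of quadrants visited by $\phi(\mathcal O)$ as well as the multiplicity of each crossing with $\R L_i$. An elementary combinatorial analysis shows that this determines $\phi(\mathcal O)\cup\bigcup_i\R L_i$ up to ambient isotopy in $\R P^2$. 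Next, since $\phi^{-1}(L_i)\subset l_i\subset\mathcal O$, the remaining $\frac{(d-1)(d-2)}{2}$ ovals of $\R\phi(C)$ are disjoint from $\bigcup_i\R L_i$ and must lie in the disk component of $\R P^2\setminus\phi(\mathcal O)$. Their number being maximal and the Harnack condition forcing them to be unnested on a single side, their arrangement is the standard Harnack nest pictured in Figure~\ref{fig:harnack}, uniquely determined by $d$.

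\emph{Stage 2 (singular case).} For a general simple Harnack curve, let $\phi_n:C_n\to\C P^2$ be a sequence of smooth simple Harnack curves of degree $d$ and maximal genus whose images converge to $\phi(C)$. By Stage 1, all the pairs $(\R P^2,\R\phi_n(C_n)\cup\bigcup_i\R L_i)$ are isotopic. Any singularity of $\phi(C)$ arises from the collapse of some topological feature of $\phi_n(C_n)$. A real crossing node would require two real arcs of $\phi_n(C_n)$ to merge, which would either make two ovals meet (contradicting the maximality of $C_n$) or force two of the arcs $l_0,l_1,l_2$ to collide (contradicting their disjointness in the limit). The only remaining possibility is the collapse of a small oval to a point, which by definition produces a solitary node. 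Hence all singularities of $\phi(C)$ are solitary nodes. For the topological uniqueness when $g(C)=0$, the positions of the contracted ovals determine the positions of the solitary nodes, and these positions are already prescribed by Stage 1; when $g(C)=\frac{(d-1)(d-2)}{2}$ the curve is smooth and Stage 1 applies directly.

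\emph{Main obstacle.} The crucial step is making Stage 1 rigorous in the pseudoholomorphic setting. In Mikhalkin's original proof, the arrangement of ovals is controlled via the logarithmic Gauss map, whose degree-one behaviour on the amoeba characterises Harnack curves. This tool is not available in the pseudoholomorphic category. The substitute I would pursue is a deformation argument showing that the moduli space of simple pseudoholomorphic Harnack curves with fixed $d$ and $s$ is path-connected: combining Gromov compactness with a local smoothing of solitary nodes (which can be performed pseudoholomorphically), any two such curves can be joined by a continuous family, so they induce isotopic pairs in $\R P^2$. Establishing this connectedness, together with ruling out bubbling off that would violate the Harnack structure in the compactness argument, is where the main technical work will be concentrated.
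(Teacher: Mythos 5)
Your Stage~1 contains a factual error that sinks the argument: it is not true that the $\frac{(d-1)(d-2)}{2}$ ovals other than $\phi(\mathcal O)$ all lie in the disk bounded by $\phi(\mathcal O)$ and form a single standard nest determined by $d$. As Figure~\ref{fig:harnack} and Lemma~\ref{lem:g>0} show, for $d=2k$ these ovals are distributed among the four quadrants of $\R P^2\setminus\bigcup_{i=0}^2\R L_i$ (three quadrants receive $\frac{k(k-1)}{2}$ ovals, the fourth $\frac{(k-1)(k-2)}{2}$), and only those in the quadrant $(+,+)$ lie in the disk bounded by $\phi(\mathcal O)$; already for the Harnack quartic all three small ovals lie \emph{outside} that disk. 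Determining this distribution and the nesting relative to $\phi(\mathcal O)$ is precisely the content of the theorem --- the Harnack condition does not force it by inspection. The paper obtains it by studying the three real ramified coverings $\pi_i:C\to L_i$: Riemann--Hurwitz forces all ramification points of $\pi_i$ to be real, the resulting graphs $\Gamma_i=\pi_i^{-1}(\R L_i)/conj_C$ on the disk-with-holes $C/conj_C$ admit an essentially unique mutual position realizing the adjunction bound on triple points (Lemmas~\ref{lem:g=0} and~\ref{lem:g>0}), and this simultaneously yields the quadrant count, the classification of singularities, and --- via Fiedler's orientation rule combined with Rokhlin's complex orientation formula --- the nesting. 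None of these ingredients appears in your proposal.

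Your proposed substitute for the amoeba argument, namely connectedness of the moduli space of simple Harnack curves with fixed $d$ and $s$, is not a fix but a restatement of the difficulty: connectedness of this space is itself a hard theorem (known in the algebraic setting essentially as a consequence of Mikhalkin's classification), and even granting it you would still need to show that the topological type is constant along a path, i.e.\ rule out crossing through singular or lower-genus curves, and exhibit one explicitly understood curve in each component. Similarly, in Stage~2 the exclusion of non-solitary singularities by ``two ovals meeting would contradict maximality of $C_n$'' does not parse: the curves $C_n$ are smooth and maximal, and nothing a priori prevents two real branches of their images from colliding in the limit. The correct mechanism is quantitative: the adjunction formula bounds the number of points of $\phi(C)$ where conjugate non-real branches meet by $\frac{(d-1)(d-2)}{2}-g$, the dessin count shows this bound is attained, and the equality case leaves no room for any other singularity.
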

Mikhalkin actually proved Theorem~\ref{thm:harnack} 
 for simple Harnack curves in any toric surface, nevertheless this
 a priory more general statement can  be deduced from the particular 
case of $\C
 P^2$, see Appendix~\ref{app:other toric}.
Existence of simple Harnack curves of maximal genus with any Newton polygon, and
intersecting transversely all toric divisors, was first established by
Itenberg 
(see \cite{IV2}). Simple Harnack curves of any degree, genus, and
sequence $s$  
 were first
constructed by Kenyon and
Okounkov in \cite{KenOko06}. In addition, when $g=0$ they could
remove the hypothesis that $\phi(C)$ has to be
 the limit of images of  a sequence of 
 simple Harnack curves of degree $d$ and genus  $g(C)=\frac{(d-1)(d-2)}{2}$.
In Theorem~\ref{thm:pseudoharnack} below, we remove this hypothesis
for any $g$.

Because they are extremal objects, simple Harnack curves play an
important role
 in real
algebraic geometry, and 
Theorem~\ref{thm:harnack} had a deep impact on subsequent developments in
this field. However their importance goes beyond  real geometry,
as showed their connection to dimers discovered by Kenyon,
Okounkov, and Sheffield in  \cite{KeOkSh06}.

\bigskip
The goal of this note is to give an alternative proof of Theorem~\ref{thm:harnack}. 
Moreover, our
proof is also valid for \emph{real pseudoholomorphic curves},
which 
are also very important objects in real algebraic and symplectic geometry. Note that
a real algebraic map $\phi:C\to \C P^2$ is  pseudoholomorphic, however
the converse is not true in general.
Mikhalkin's original proof of Theorem~\ref{thm:harnack}
uses amoebas of algebraic curves, 
and does not a priory apply to real pseudoholomorphic maps which
are not algebraic.

It is nevertheless possible to read our proof of 
Theorem~\ref{thm:harnack} in the algebraic category, by  going
directly to Section~\ref{sec:algebraic}, and defining the map $\pi_i: C\to
L_i$ as the composition of  $\phi$ with the linear projection 
$\C P^2\setminus\left(L_j\cap L_k\right)\to L_i$, with
$\{i,j,k\}=\{0,1,2\}$.

\medskip
We consider $\C P^2$ equipped with the standard Fubini-Study symplectic
form $\omega_{FS}$.
Recall that an almost complex structure
 $J$ on $\C P^2$ 
is said to be \emph{tamed} by $\omega_{FS}$ if
 $\omega_{FS}(v,Jv)>0$ for any
non-null vector $v\in  T\C P^2$.
 Such an almost complex structure
is called \emph{real} if 
the standard complex conjugation $conj$ on
$\C P^2$ is $J$-antiholomorphic  (i.e. $conj\circ J=J^{-1}\circ conj$).
For example, the standard complex structure 
on $\C P^2$ is a real almost complex structure.

Let $(C,\omega)$ be a compact symplectic surface equipped with a 
complex structure $J_C$ tamed by $\omega$, and  a
$J_C$-antiholomorphic involution $conj_C$, and
let
 $J$ be a real  almost complex structure on $\C P^2$.
A symplectomorphism $\phi:C\to \C P^2$ is a 
\emph{real   $J$-holomorphic map} if
$$d\phi\circ J_C=J\circ d\phi \quad\mbox{and}\quad \phi\circ
  conj_C=conj\circ\phi. $$
It is of degree $d$ if
$\phi_*([C])=d[\C P^1]$ in $H_2(\C P^2;\Z)$.
Recall that 
any intersection of two $J$-holomorphic curves is positive (see
  {\cite[Appendix E]{McS}}). 

The definition of simple Harnack
  curves extends immediately to the real $J$-holomorphic case.
Given three distinct real
$J$-holomorphic lines $L_0$, $L_1$, and $L_2$ in $\C P^2$ such that
$\bigcap_{i=0}^2 L_i=\emptyset$,
a real $J$-holomorphic curve $\phi:C \to\C P^2$ 
is a simple Harnack curve if $C$ is maximal, and if 
there exists a connected component 
 $\mathcal O$ of $\R C$, and three disjoint
  arcs $l_0,l_1,l_2$ contained in $\mathcal O$ such that
  $\phi^{-1}(L_i) \subset l_i$.  

\begin{thm}\label{thm:pseudoharnack}
Let $\phi:C\to\C P^2$ be a $J$-holomorphic 
simple Harnack curve of degree $d$.
 Then 
the curve $\phi(C)$ 
has  solitary nodes 
 as only singularities (if any).
Moreover
%Theorem~\ref{thm:harnack} holds for $J$-holomorphic simple Harnack curves.
%In particular 
if either $g(C)=0$ or $g(C)=\frac{(d-1)(d-2)}{2}$, then
 the topological type of the pair 
$\left(\R P^2, \R \phi(C)\bigcup_{i=0}^2\R L_i\right)$ does not depend
 on $J$, once $d$ and 
$s$ are fixed.
\end{thm}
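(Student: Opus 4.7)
The plan is to use the three $J_C$-holomorphic projections $\pi_i : C \to L_i \simeq \C P^1$ obtained by post-composing $\phi$ with the pencil of real $J$-holomorphic lines through $L_j\cap L_k$, where $\{i,j,k\}=\{0,1,2\}$. By Gromov's existence theorem the pencil exists and is canonically isomorphic to $L_i$ via positivity of intersections; the Harnack condition $l_j\cap l_k=\emptyset$ forces $L_j\cap L_k\notin\phi(C)$, so each $\pi_i$ is defined on all of $C$ as a real $J_C$-holomorphic map of degree $d$.

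Step 1 (Singularities). I argue any singular point of $\phi(C)$ is a solitary node. Since $\phi(\R C)\subset\R\phi(C)$ meets $\bigcup_{i=0}^2\R L_i$ exactly along the prescribed arcs $l_0,l_1,l_2\subset\mathcal O$, and since positivity of intersections gives $\sum_{p\in l_i}m_p=d$ for each $i$, no real singular point of $\phi(C)$ can come from a real point of $C$: such a point would require an unaccounted tangency with some $L_i$ or a self-intersection of $\phi(\mathcal O)$ along $l_i$. Hence real singular points arise from $conj_C$-invariant pairs of non-real points of $C$, and positivity of intersections between the two local branches forces transversality, yielding a solitary node. Non-real singularities come in conjugate pairs of nodes, controlled by the adjunction formula for $J$-holomorphic curves.

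Step 2 (Topological classification). The image of the distinguished oval $\mathcal O$ meets $\bigcup_{i=0}^2\R L_i$ in the cyclic pattern prescribed by $s$, which topologically determines its embedding in $\R P^2$. The remaining $g(C)$ ovals of $\R C$ lie in $\R P^2\setminus\bigcup_{i=0}^2\R L_i$ because they are disjoint from $\mathcal O$, and each one is entirely contained in a single quadrant of this complement. The restriction of $\pi_i$ to any non-distinguished oval has degree zero onto $\R L_i\simeq S^1$, so it folds over real critical values of $\pi_i$. A Riemann--Hurwitz count for $\pi_i$ fixes the total real branching in terms of $d$ and $g(C)$; the branching located on $\mathcal O$ is read off from $s$; and the residual branching, combined with the dividing-curve structure of the maximal curve $C$, forces a unique quadrant distribution of the non-distinguished ovals depending only on $d$, $g(C)$, and $s$. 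Since this distribution is expressed in terms of data that does not involve $J$, independence of $J$ is automatic.

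The main obstacle is the combinatorial rigidity claim in Step 2. Maximality of $C$ provides a complex orientation on $\R C$ coming from the boundary of one of the two halves of $C\setminus\R C$, and the requirement that each $\pi_i$ respect this orientation on a chosen half forces every non-distinguished oval to sit in a prescribed quadrant. Showing that the resulting assignment depends only on $s$, and matching the Riemann--Hurwitz bookkeeping in the two extremal genus cases $g(C)\in\{0,\binom{d-1}{2}\}$, is the technical heart of the argument, and is also where the dichotomy on $g(C)$ in the hypothesis enters: for intermediate $g(C)$ the same bookkeeping leaves undetermined freedom in the distribution.
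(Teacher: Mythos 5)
Your setup (the three pencils of $J$-holomorphic lines, the degree-$d$ ramified coverings $\pi_i$, the Riemann--Hurwitz count forcing all ramification of $\pi_i$ to be real and located on $l_j\cup l_k$ and on the other ovals) matches the paper's, but both of your main steps have genuine gaps. In Step 1, the claim that ``positivity of intersections between the two local branches forces transversality'' is false: two $conj_C$-conjugate branches meeting at a real point can be tangent there, and the local intersection is still positive (of multiplicity $\ge 2$). Nothing local rules this out, nor does your argument exclude non-real singular points or points where more than two branches meet. The paper's route is global: the adjunction formula bounds the number of triple points of $\bigcup_{i=0}^2\Gamma_i$ on $C/conj_C$ by $\frac{(d-1)(d-2)}{2}-g$, the explicit combinatorial analysis of the arrangement of the three graphs $\Gamma_i$ (Lemmas~\ref{lem:g=0} and~\ref{lem:g>0}) shows this bound is attained, and it is precisely the \emph{equality} case of adjunction that forces every singularity to be a transverse solitary node and excludes all other singularities.

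In Step 2, even granting that your bookkeeping pins down which quadrant of $\R P^2\setminus\bigcup_{i=0}^2\R L_i$ each non-distinguished oval lies in (this is what the arrangement of the $\Gamma_i$ yields in the paper), that data does not determine the topological type of the pair $\left(\R P^2, \R \phi(C)\bigcup_{i=0}^2\R L_i\right)$: you still need the mutual position of the ovals --- whether each small oval bounds a disk disjoint from the rest of the curve, and which ovals lie inside the disk bounded by $\phi(\mathcal O)$ when $d$ is even. Your proposal never addresses nesting. The paper resolves this with a second, independent ingredient: the complex orientations of the dividing curve $\R C$, propagated along the pencil through $p_{1,2}$ via Fiedler's orientation rule, give an upper bound $\Pi_+\le\frac{(k-1)(k-2)}{2}$ and $\Pi_-\ge 0$ for the signed injective pairs, and Rokhlin's complex orientation formula $\Pi_+-\Pi_-=\frac{(k-1)(k-2)}{2}$ forces equality, which is exactly the statement that the ovals are unnested and distributed as in Figure~\ref{fig:harnack}. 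Without Rokhlin's formula (or an equivalent global constraint) the ``combinatorial rigidity'' you invoke at the end of Step 2 does not close.
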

It follows from  Theorem~\ref{thm:pseudoharnack} that
Figure~\ref{fig:harnack}
is enough to recover all topological types of pairs 
$\left(\R
P^2, \R \phi(C)\bigcup \cup_{i=0}^2\R L_i\right)$ where $\phi:C\to \C
P^2$ is
 a simple Harnack
curve, see Appendix~\ref{app:all harnack}.
As in the case of algebraic curves, 
one may generalize Theorem~\ref{thm:pseudoharnack} to   $J$-holomorphic
simple Harnack curves in any toric surface, see Appendix~\ref{app:other toric}.

The proof of Theorem~\ref{thm:pseudoharnack} goes along the following lines:
the three projections from $\C P^2\setminus\left(L_j\cap L_k\right)$
to $L_i$
induce three 
%holomorphic maps 
ramified coverings
$\pi_i:C\to L_i$;
by considering the arrangement of the real Dessins d'enfants 
$\pi_i^{-1}(\R L_i)$ on $C/conj_C$, we deduce the number of connected
components of $ \R \phi(C)$ in 
each quadrant 
of $\R P^2\setminus \left(\cup_{i=0}^2\R L_i\right)$, as well as its
complex orientation; the mutual position of all these connected
components is then deduced from Rokhlin's complex orientation formula.

\medskip
{\bf Acknowledgment: }I am grateful to Ilia Itenberg and Grigory
Mikhalkin for their comments on a preliminary exposition of this work.
I also thank Patrick Popescu-Pampu,
Jean-Jacques Risler, and Jean-Yves Welschinger for
 their valuable remarks on  a first version of this note.

\section{Proof of Theorem \ref{thm:pseudoharnack}}

Let $\phi:C\to\C P^2$ be a $J$-holomorphic simple Harnack curve in $\C P^2$
of degree $d$ and genus $g$.
We define $p_{i,j}=L_i\cap L_j$.

\subsection{Construction of the maps $\pi_i:C\to L_i$}
Gromov proved in  \cite{Gro} that there exists a unique
$J$-holomorphic line passing  through
 two distinct points in $\C P^2$.
By uniqueness, this
line is real if the two points are in $\R P^2$, hence there exists a
real pencil of
$J$-holomorphic lines  through any point of $\R P^2$.
In particular if 
$\{i,j,k\}=\{0,1,2\}$, the  map $\C P^2\setminus \{p_{j,k}\}\to L_i$, which associates to
each point $p$  the unique intersection point of $L_i$
with the $J$-holomorphic  line passing through $p$ and $p_{j,k}$,
 is a real 
%$J$-holomorphic 
smooth map. We define $\pi_i:C\to L_i$ as the
composition of $\phi$ with this projection. 
By positivity of intersections of $J$-holomorphic curves, 
the map $\pi_i$ is a real ramified covering.
%This is a real holomorphic
%map (recall that any almost complex structure on a topological surface
%is integrable).

\subsection{Dessins d'Enfants on $C$}\label{sec:algebraic}

We denote by $\widetilde C$  the quotient of $C$ by $conj_C$. Since $C$ is maximal, the surface $\CC$ is a disk with $g$
holes. 

Let $\Gamma_i\subset \CC$ be the graph $\pi_i^{-1}(\R L_i)/conj_C$. 
Note that $\Gamma_j\cap \Gamma_k=\phi^{-1}(\R P^2)$ if $j\ne k$,
in particular $\Gamma_j\cap \Gamma_k=\bigcap_{i=0}^2\Gamma_i$.
We call a \emph{triple point} 
an isolated point in $\bigcap_{i=0}^2\Gamma_i$. By construction, a triple
point  corresponds to a singular point of $\phi(C)$ in $\R P^2$, where at
least two complex conjugated non-real branches intersect. 
By the adjunction formula (see {\cite[Chapter 2]{McS}} in the
case of $J$-holomorphic curves), the graph $\bigcup_{i=0}^2\Gamma_i$
has no more than $\frac{(d-1)(d-2)}{2} -g$ triple points, and
$\phi(C)$ is nodal with only solitary nodes in case of equality.

Let $\{i,j,k\}=\{0,1,2\}$. We label by $+$ (resp. $-$) the connected
component of $\R L_i\setminus\{p_{i,j},p_{i,k}\}$ containing
(resp. disjoint from)
$\phi(\mathcal O)\cap L_i$. 
We endow each connected component of
$\Gamma_i\setminus\pi_i^{-1}\left(\{p_{i,j},p_{i,k}\}\right)$ with the
sign of the corresponding component of $\R L_i\setminus\{p_{i,j},p_{i,k}\}$.
We also denote by
$(\epsilon_0,\epsilon_1)\in\{+,-\}^2$ the connected component of 
$\R P^2\setminus \left(\bigcup_{i=0}^2 \R L_i\right)$ which project to
the components 
labeled by $\epsilon_0$ and $\epsilon_1$ of $\R L_0$ and $\R L_1$
under the projections of center $p_{1,2}$ and $p_{0,2}$ respectively.

\medskip
The map $\pi_i:C\to L_i$ is 
%holomorphic 
a ramified covering 
of degree $d$, so by the 
Riemann-Hurwitz formula it has exactly 
$2(d+g-1) $
ramification points (counted with multiplicity). 
Given $j\ne i$, a subarc of 
$l_j$ connecting 
two  consecutive
points in $l_j\cap \phi^{-1}(L_j)$ has to contain  a ramification
point of $\pi_i$ in its interior, and a point of contact of order $c$
of $l_j$ with $\R L_j$ is a ramification point of multiplicity $c-1$
of $\pi_i$. Alltogether, 
the set $l_j\cup l_k$ with $\{i,j,k\}=\{0,1,2\}$
contains at least $2(d-1)$ ramification points of $\pi_i$ (counted
with multiplicity).
Moreover a connected component of $\R C$ distinct from $\mathcal O$ contains
at least two ramification points of $\pi_i$. Since $C$ has $g+1$
connected components, if follows that these two previous lower bounds
are in fact equality, in particular all ramification points of $\pi_i$ are real. 
This implies that each connected component of $\CC\setminus \Gamma_i$
is a disk, and that the restriction of $\pi_i$ on this disk is a
homeomorphism to one the two hemispheres of $L_i\setminus\R L_i$.

\begin{lemma}\label{lem:g=0}
If $g=0$, then
the arrangement of $\bigcup_{i=0}^2\Gamma_i$ in $\CC$ only depends,  up to
 orientation preserving homeomorphism, on $d$ and $s$.
In particular it 
has exactly
$\frac{(d-1)(d-2)}{2}$ triple points.
\end{lemma}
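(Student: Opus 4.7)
The plan is to show that the combinatorial structure of the arrangement $\bigcup_i\Gamma_i$ on the disk $\CC$ is entirely determined by the boundary combinatorics on $\partial\CC=\mathcal O$, which in turn is read off from $d$ and $s$. Since $g=0$, $\CC$ is a topological disk with boundary $\mathcal O$. Each $\Gamma_i$ contains $\partial\CC$ (because $\pi_i(\mathcal O)\subset \R L_i$) and extends into the interior by arcs emanating from the ramification points of $\pi_i$; by the Riemann--Hurwitz equality established just before the lemma, all of these ramification points sit on $l_j\cup l_k$, with $\{i,j,k\}=\{0,1,2\}$. Their cyclic ordering on $\mathcal O$ and their multiplicities are dictated by $s_j$ and $s_k$: a contact point of order $c$ on $l_j$ contributes a ramification of multiplicity $c-1$ for $\pi_i$, and between any two consecutive such contacts on $l_j$ lies exactly one simple ramification point (saturation of the lower bound $2(d-1)$ forces this when $g=0$).

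My first step is to show that each $\Gamma_i$ is individually determined up to orientation-preserving homeomorphism of $\CC$: this is a standard consequence of the theory of real dessins d'enfants, as a degree-$d$ branched covering of a disk by a disk with all ramification on the boundary is pinned down by its boundary data. The main obstacle is the second step, namely showing that the three graphs fit together uniquely. I would enrich the $\pm$-labeling of faces of $\CC\setminus\Gamma_i$ (already introduced before the lemma) into a triple $(\epsilon_0,\epsilon_1,\epsilon_2)\in\{+,-\}^3$ on faces of the common refinement $\CC\setminus\bigcup_i\Gamma_i$. Every such face is mapped by $\phi$ into a single quadrant of $\R P^2\setminus\bigcup_i\R L_i$ (or into the unique complex region, for off-real faces), and positivity of intersections of $J$-holomorphic curves forbids tangencies between distinct $\Gamma_i$'s, so they meet only transversely and only at isolated triple points. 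A careful bookkeeping---most naturally by induction on $d$, peeling off one contact point on $\mathcal O$ at a time---should recover the interior arrangement face by face from the already-fixed boundary combinatorics.

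Finally, once uniqueness of the arrangement is in hand, the number of triple points is a topological invariant and must coincide with the count in any classical algebraic simple Harnack curve sharing the same $d$ and $s$. Since the adjunction bound $\frac{(d-1)(d-2)}{2}$ is saturated by such algebraic examples (where $\phi(C)$ has solitary nodes as its only singularities), uniqueness propagates this saturation to every pseudoholomorphic representative, yielding the asserted triple-point count.
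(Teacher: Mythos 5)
Your first step (each $\Gamma_i$ individually is a chain of disks pinned down by the boundary data, since all ramification of $\pi_i$ lies on $\mathcal O$ when $g=0$) is exactly the paper's starting point and is fine. The genuine gap is in your second step, which is where the whole content of the lemma lives: ``a careful bookkeeping \ldots\ by induction on $d$ \ldots\ should recover the interior arrangement'' is a plan, not an argument, and as stated it cannot succeed. The issue is that two interior arcs of $\Gamma_i$ and $\Gamma_j$ with prescribed endpoints on $\mathcal O$ can a priori cross any number of extra times (in pairs); each such crossing is just another point of $\phi^{-1}(\R P^2)$, i.e.\ another triple point, and nothing in the boundary combinatorics, the sign labels, or the transversality of the crossings excludes these extra intersections. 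So boundary data alone does not determine the mutual position of the three graphs, and your induction has no mechanism to rule out the non-minimal configurations.

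What actually closes this gap in the paper is the adjunction formula used \emph{at this stage}, not at the end: it bounds $\#\bigl(\Gamma_i\cap\Gamma_j\setminus\R C\bigr)$ above by $\frac{(d-1)(d-2)}{2}=1+2+\cdots+(d-2)$, while the two transverse chains of disks force at least that many crossings, and the configuration realizing this minimum is unique. You instead defer adjunction to the final paragraph, using it only to transfer the triple-point count from an algebraic example after uniqueness is ``in hand'' --- but uniqueness is precisely what you have not established, and it cannot be established without the adjunction upper bound. In short, the logical order must be: adjunction upper bound $=$ combinatorial lower bound $\Rightarrow$ unique extremal arrangement $\Rightarrow$ triple-point count; your proposal runs this backwards and leaves the decisive implication unproved.
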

\begin{proof}
Since $\pi_i$ has no ramification point outside $\mathcal O$, the
graph $\Gamma_i$ decomposes $\CC$ into a chain of disks, where two
adjacent disks intersect along (the closure of) a connected component
of $\Gamma_i\setminus \mathcal O$. See Figure~\ref{fig:disk} in the case when
$d=6$ and $\phi^{-1}(L_i)$ consists of $6$ distinct points. By
definition,
 the points of $\Gamma_i$ in $l_i$ are endowed with the sign $+$. 
\begin{figure}[h!]
\centering
\begin{tabular}{ccc} 
\includegraphics[height=5cm, angle=0]{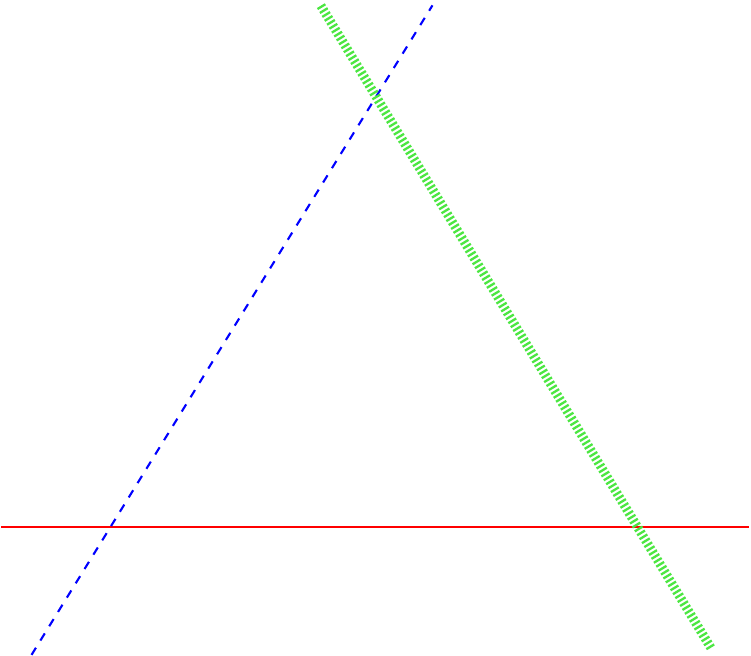}& \hspace{5ex}
\put(-100, 15){$\R L_0$}
\put(-140, 80){$\R L_1$}
\put(-60, 80){$\R L_2$}
\put(-170, 35){$p_{0,1}$}
\put(-30, 35){$p_{0,2}$}
\put(-85, 120){$p_{1,2}$}
 &
\includegraphics[height=5cm, angle=0]{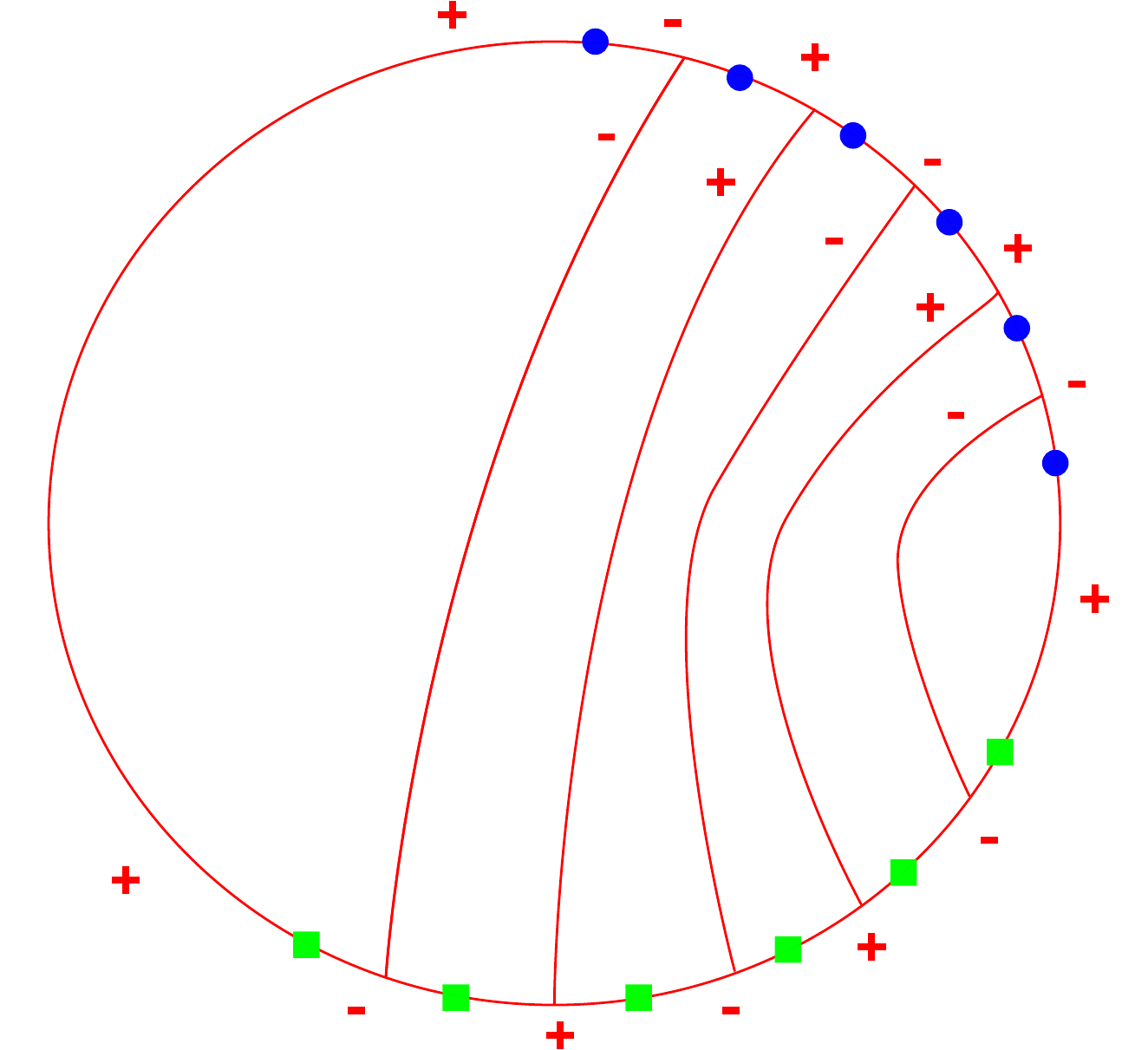}
\\ 
\\ a) $\R L_0\cup \R L_1\cup \R L_2$ in $\R P^2$& & b) The graph $\Gamma_0=\pi_0^{-1}(L_0)$, dots and squares 
\\ && $\ \ \ \  $ being points in $\phi^{-1}(L_1)$ and $\phi^{-1}(L_2)$ respectively
\end{tabular}
\caption{}
\label{fig:disk}
\end{figure}

By the adjunction formula, the number of intersection points of
the graphs $\Gamma_i$ and $\Gamma_j$, with
$i\ne j$, is not more than
$\frac{(d-1)(d-2)}{2}=1+2+\ldots+d-2$. However, this number is clearly
the minimal number of intersection point of $\Gamma_i$ and $\Gamma_j$,
and  there exists a unique mutual position of those graphs that
achieves this lower bound (see Figure~\ref{fig:disk2}a). The lemma
follows immediately by symmetry  (see Figure~\ref{fig:disk2}b).
\begin{figure}[h!]
\centering
\begin{tabular}{ccc} 
\includegraphics[height=5cm, angle=0]{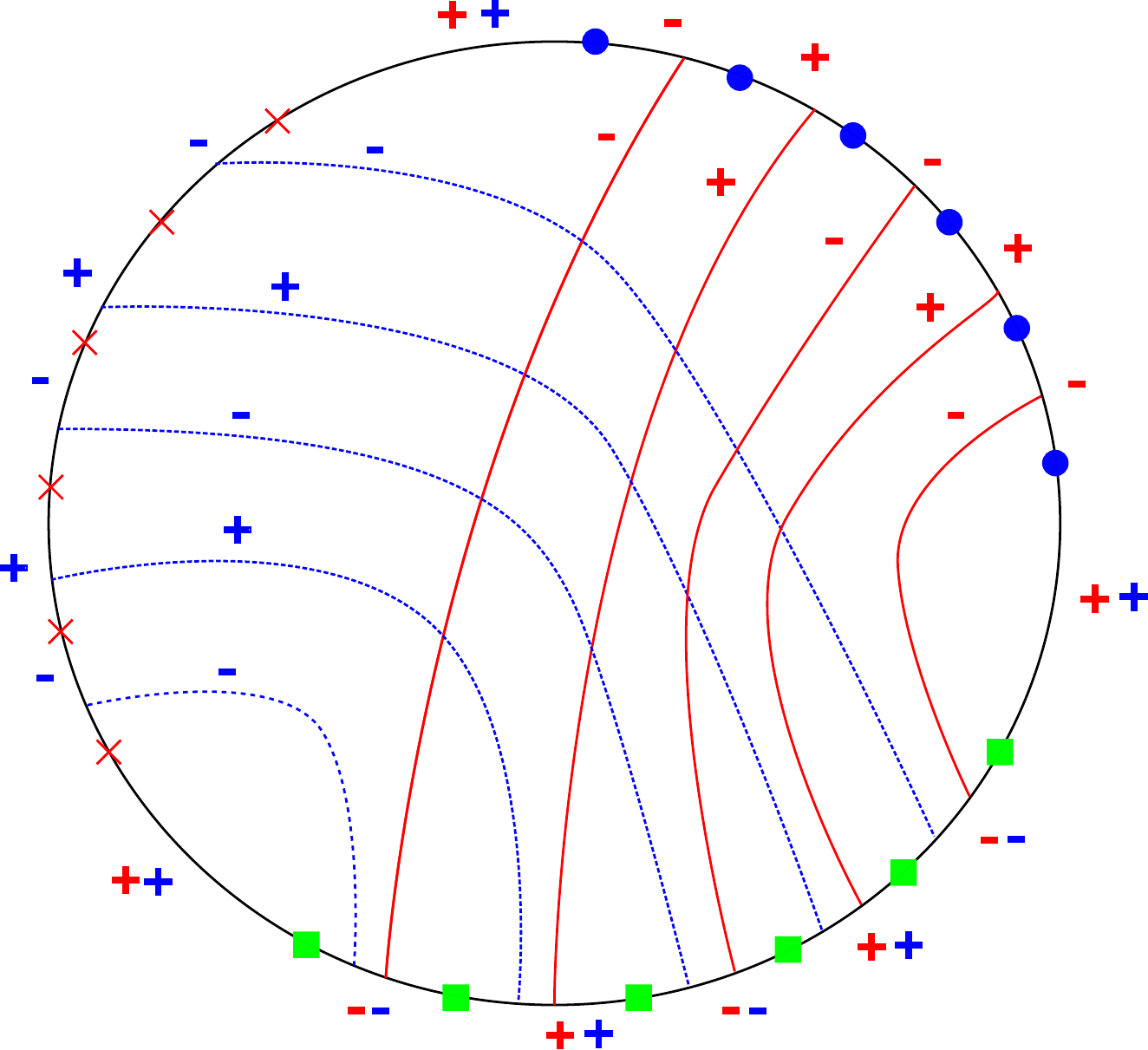}& \hspace{5ex} &
\includegraphics[height=5cm, angle=0]{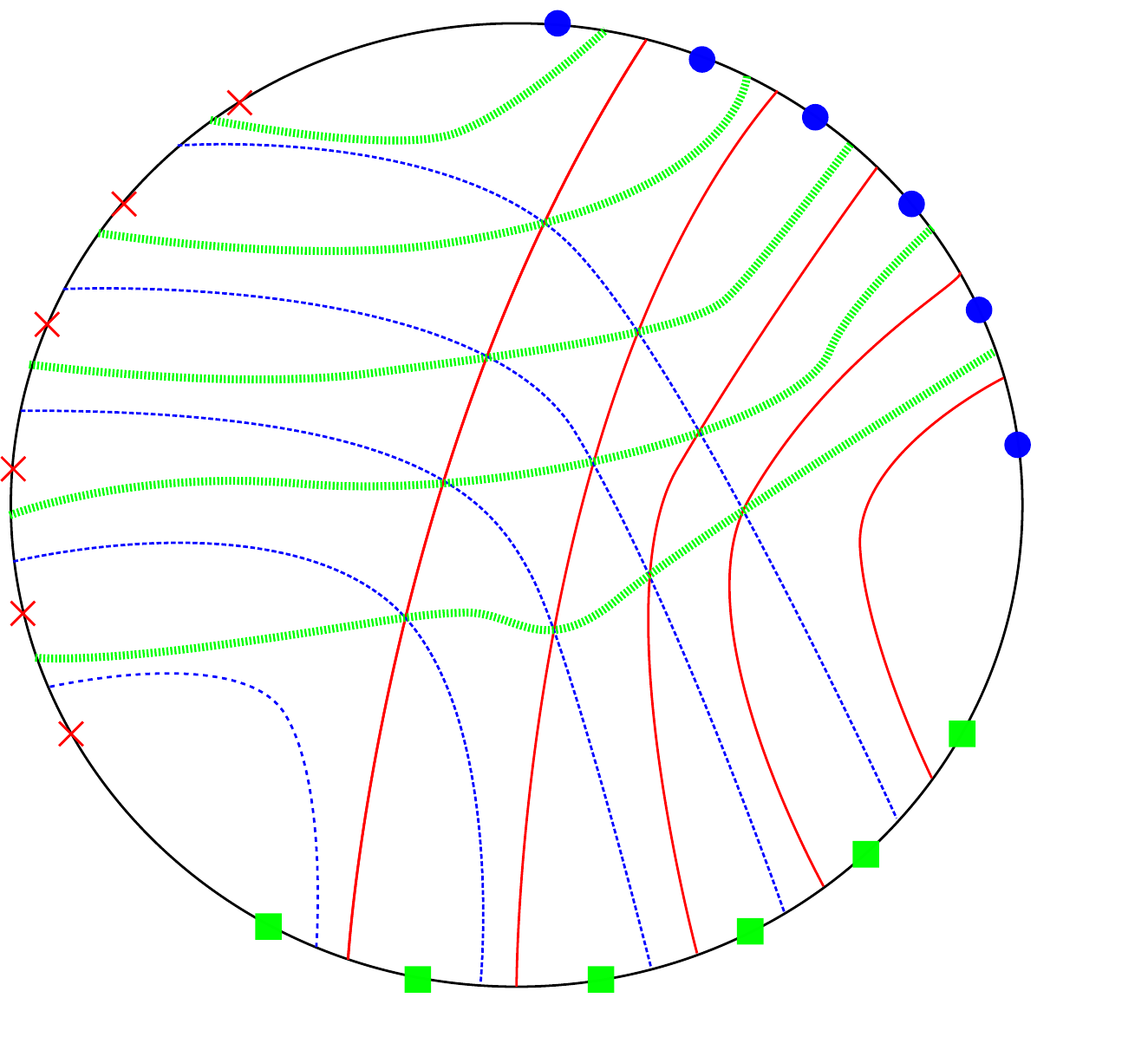}
\\ \\ a) $\Gamma_0\cup\Gamma_1$, crosses being points in $\phi^{-1}(L_0)$
&& b) $\Gamma_0\cup\Gamma_1\cup\Gamma_2$ 
\end{tabular}
\caption{}
\label{fig:disk2}
\end{figure}
\end{proof}

In case of positive genus, we have the following lemma.

\begin{lemma}\label{lem:g>0}
The
 arrangement of $\bigcup_{i=0}^2\Gamma_i$ 
has exactly
$\frac{(d-1)(d-2)}{2}-g$ triple points.
Moreover if $d=2k$ (resp. $d=2k+1$), then $\R \phi(C)$ has exactly
$\frac{(k-1)(k-2)}{2}$ 
(resp. $\frac{k(k+1)}{2}$)
connected components in the quadrant  $(+,+)$ (resp. $(-,-)$), and 
$\frac{k(k-1)}{2}$ 
connected components in each of the other quadrants.
\end{lemma}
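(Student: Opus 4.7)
The plan is to relate the arrangement $\bigcup_{i=0}^2\Gamma_i$ on $\widetilde C$ (a disk with $g$ holes) to the genus-$0$ arrangement of Lemma~\ref{lem:g=0} via a combinatorial hole-closing operation, and then transfer the counts.

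First, for each boundary component $\mathcal{O}_m$ of $\widetilde C$ distinct from $\mathcal O$ ($m=1,\dots,g$), I would observe that the inclusion $\phi^{-1}(\bigcup_i L_i)\subset l_0\cup l_1\cup l_2\subset \mathcal{O}$ implies $\phi(\mathcal{O}_m)\cap \R L_i=\emptyset$ for all $i$; hence $\phi(\mathcal{O}_m)$ is contained in a single quadrant $q_m$ of $\R P^2\setminus \bigcup_i \R L_i$. Each $\pi_i|_{\mathcal{O}_m}$ is a degree-$2$ ramified covering of an arc of $\R L_i\setminus\{p_{i,j},p_{i,k}\}$ lying in the component whose sign is the corresponding sign of $q_m$, with two simple ramification points. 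Each ramification point contributes one interior arc of $\Gamma_i$ attached to $\mathcal{O}_m$, so $\mathcal{O}_m$ has six interior arcs (two per $\Gamma_i$) attached to it.

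Second, I would contract each $\mathcal{O}_m$ to a single interior point $x_m$ of a new disk $\widetilde C_0$, producing modified graphs $\Gamma_i'$. The cyclic arrangement of the six ramification points on $\mathcal{O}_m$ together with the sign data forces the six arcs to pair up into three transverse arcs through $x_m$, one from each $\Gamma_i'$; hence $x_m$ becomes a triple point of $\bigcup \Gamma_i'$ located in the quadrant $q_m$. The outer-boundary data (arcs $l_i$ and the sequence $s$) are unaffected, so $\bigcup \Gamma_i'$ is a genus-$0$ simple Harnack dessin of degree $d$ with sequence $s$. By Lemma~\ref{lem:g=0} and the explicit description in Figures~\ref{fig:disk} and \ref{fig:disk2}, this arrangement has $\frac{(d-1)(d-2)}{2}$ triple points, distributed as $\frac{(k-1)(k-2)}{2}$ in $(+,+)$ for $d=2k$ or $\frac{k(k+1)}{2}$ in $(-,-)$ for $d=2k+1$, and $\frac{k(k-1)}{2}$ in each of the other three quadrants.

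Finally, since the closing operation added exactly $g$ triple points, the original $\bigcup \Gamma_i$ has $\frac{(d-1)(d-2)}{2}-g$ triple points, matching the adjunction upper bound; hence $\phi(C)$ is nodal with only solitary nodes. For each quadrant $q$, the components of $\R\phi(C)$ contained in $q$ split into ovals $\phi(\mathcal{O}_m)$ with $q_m=q$ and solitary nodes in $q$; their total equals the number of triple points of $\bigcup \Gamma_i'$ in $q$, yielding the claimed count.

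The main obstacle is verifying rigorously that the closing operation is well-defined: one must show that the six ramification points on each $\mathcal{O}_m$ are cyclically arranged so that pairing by $\Gamma_i$ produces three transverse arcs at $x_m$. This is a local combinatorial statement about the degree-$2$ maps $\pi_i|_{\mathcal{O}_m}\colon S^1\to S^1$ and the sign constraints from the first step. Alternatively, one could bypass the explicit closing by a direct Euler-characteristic count on $\widetilde C$ using the cell structure induced by $\bigcup \Gamma_i$, combined with induction on $g$.
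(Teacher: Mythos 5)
Your proposal is correct and follows essentially the same route as the paper: the paper likewise caps each boundary component of $\widetilde C$ other than $\mathcal O$ with a disk carrying a triple point (its Figure~\ref{fig:g>0}a--b, justified by the fact that each such component carries exactly two ramification points of each $\pi_i$), thereby reducing to the arrangement of Lemma~\ref{lem:g=0}. The local verification you flag as the main obstacle is exactly what the paper's Figure~\ref{fig:g>0}a encodes, so your write-up is, if anything, slightly more explicit about that step.
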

\begin{proof}
Locally around each boundary component of $\CC$ distinct from
$\mathcal O$, the graph $\bigcup_{i=0}^2\Gamma_i$  looks like in
Figure~\ref{fig:g>0}a. In particular, we may glue a disk 
as depicted in Figure~\ref{fig:g>0}b. Performing this operation
to each boundary component of $\CC$ distinct from
$\mathcal O$, the lemma is proved with the same arguments as Lemma~\ref{lem:g=0}.
\begin{figure}[h!]
\centering
\begin{tabular}{ccc} 
\includegraphics[height=5cm, angle=0]{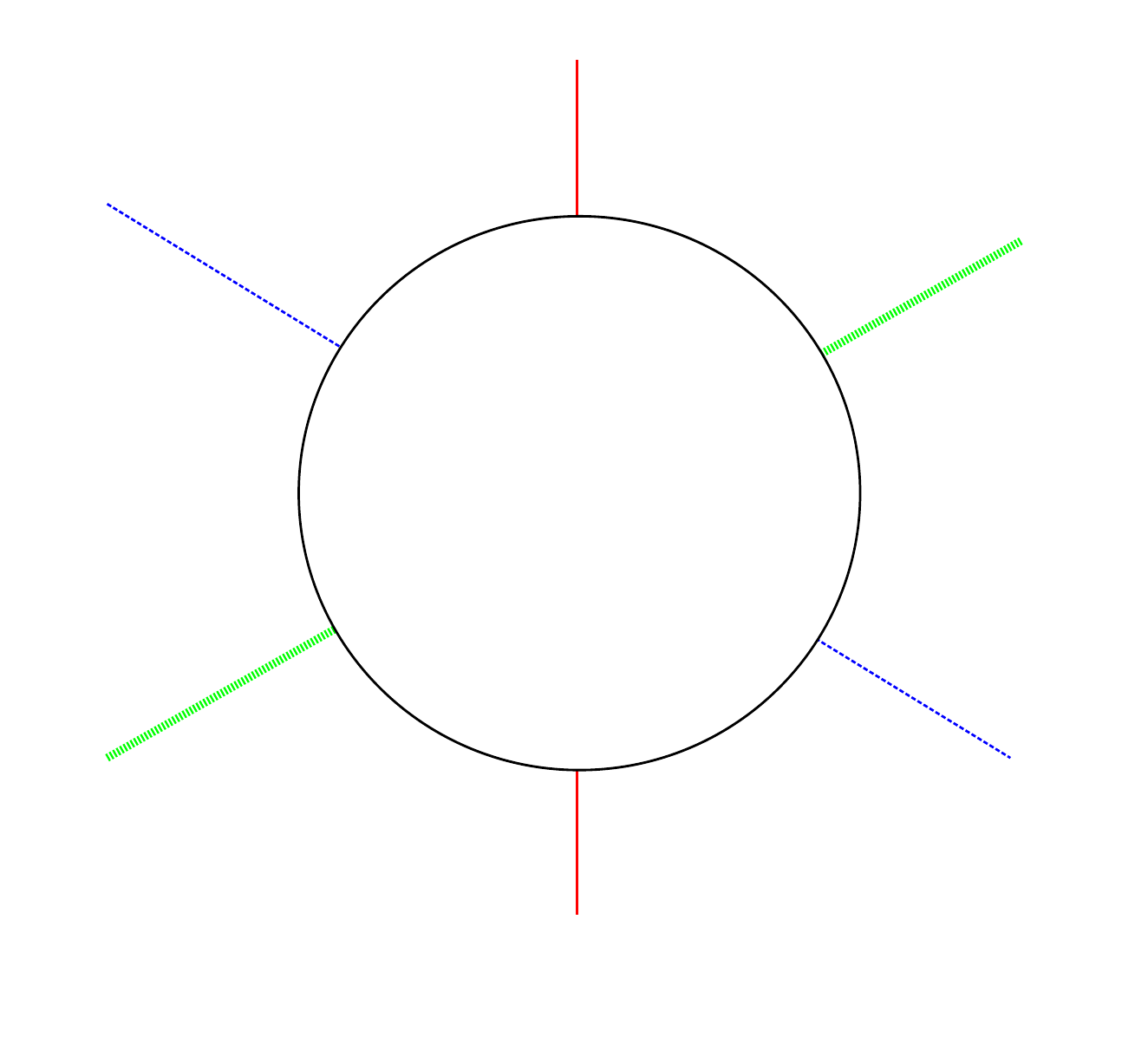} &  
\includegraphics[height=5cm, angle=0]{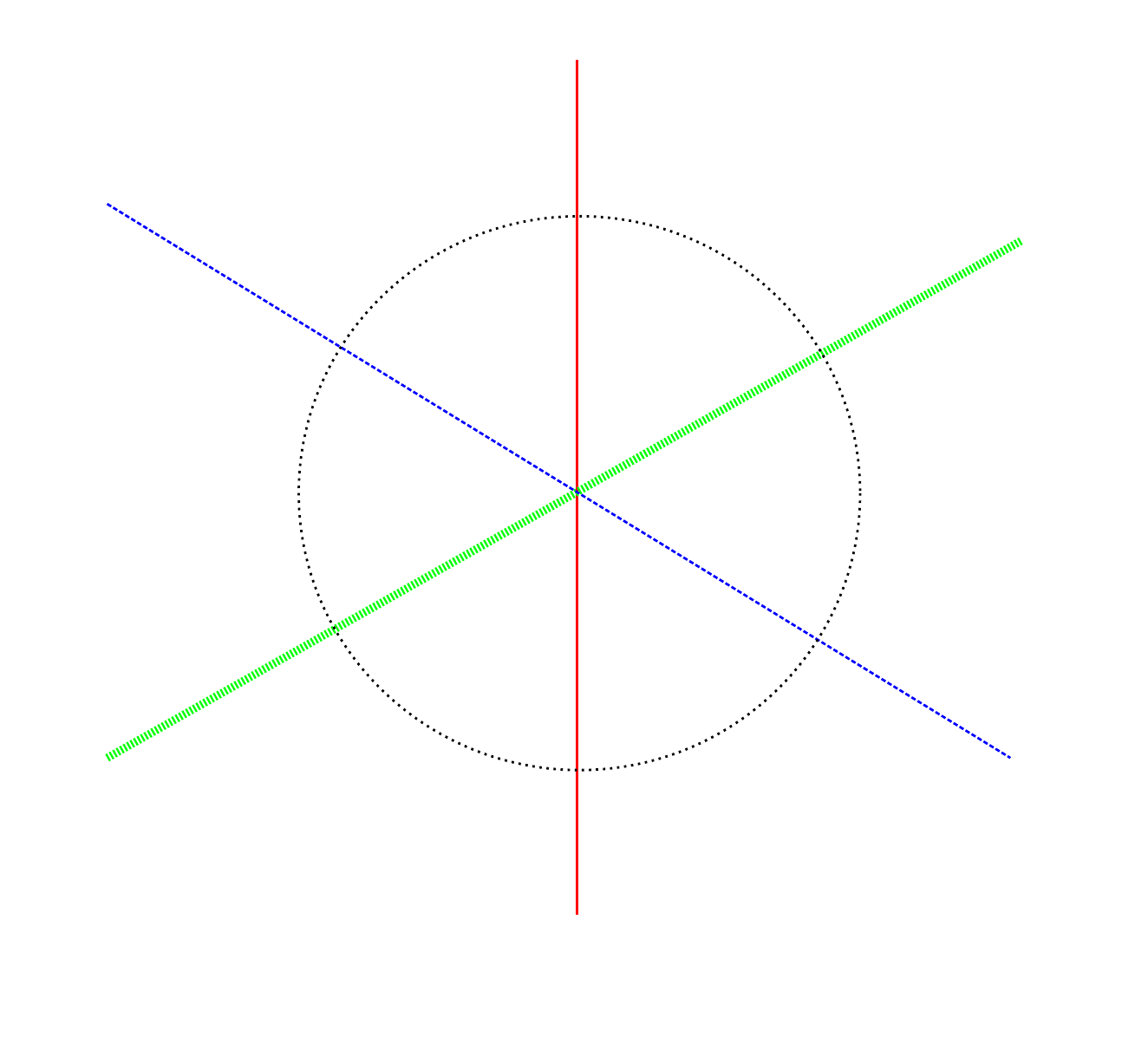} & 
\includegraphics[height=5cm, angle=0]{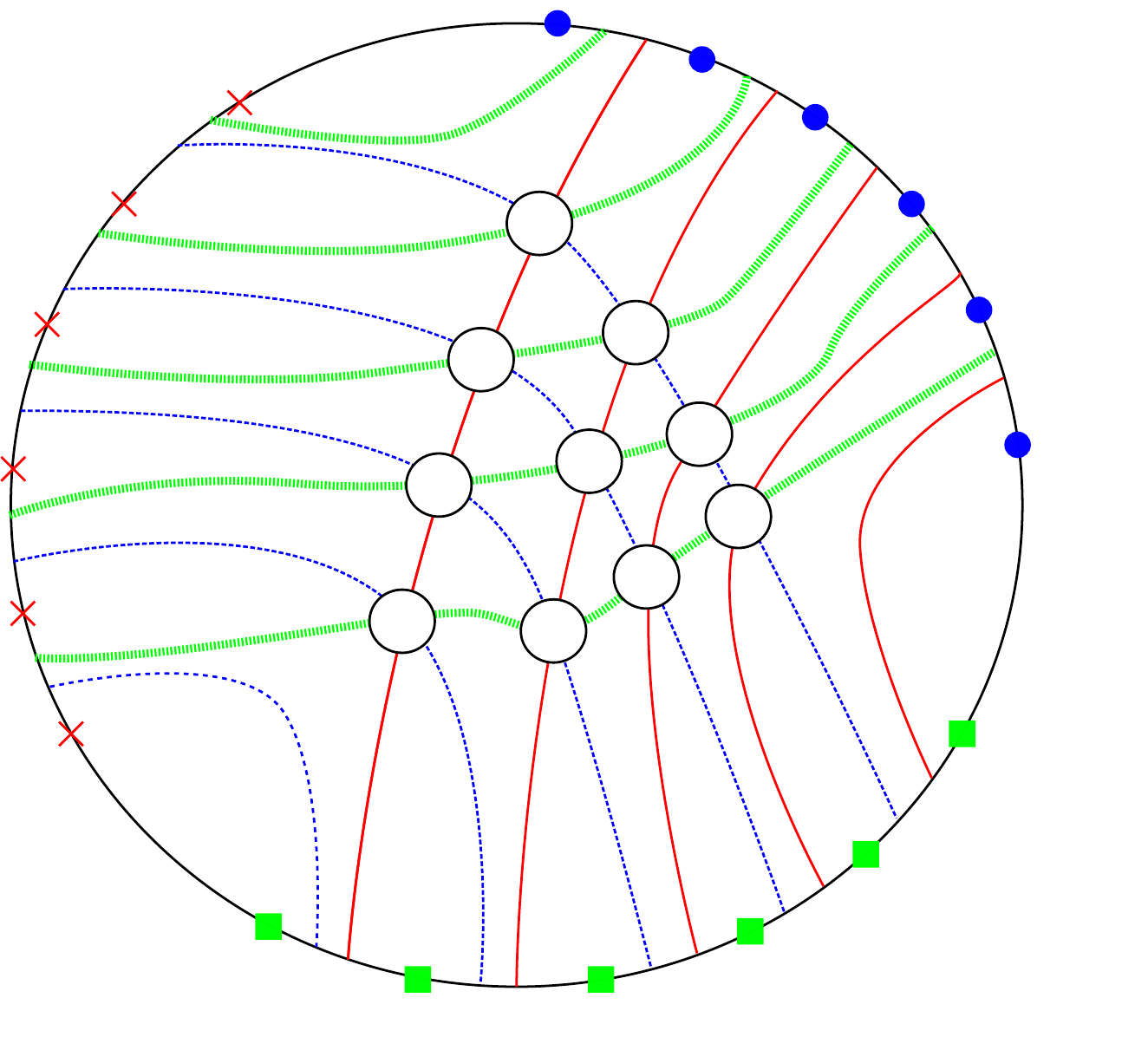}
\\ \\ a) & b) & c)
\end{tabular}
\caption{}
\label{fig:g>0}
\end{figure}
\end{proof}

Even if this will eventually 
follows from Theorem~\ref{thm:pseudoharnack}, 
we do not claim that the disk gluing in the proof of
Lemma~\ref{lem:g>0}  has any interpretation in terms of 
degenerations of $\phi(C)$.
Note that when $g=\frac{(d-1)(d-2)}{2}$, the arrangement 
$\bigcup_{i=0}^2\Gamma_i$  only depends,  up to 
orientation preserving homeomorphism, on $d$ and $s$. See
Figure~\ref{fig:g>0}c in the case $d=6$.

\subsection{Application of Rokhlin's complex orientation formula}

To end the proof of Theorem~\ref{thm:pseudoharnack} in the case
$d=2k$, it remains   to 
prove the following lemma. 
The case of curves of odd degree is entirely
similar, and is left to the reader.
\begin{lemma}
\begin{enumerate}
The following hold:
\item  $\phi(\gamma)$ bounds a disk in $\R P^2$ disjoint from
  $\R \phi(C\setminus\gamma)$ for any connected component $\gamma$ of $\R
  C\setminus \mathcal O$;
\item  a connected component of $\R\phi(C\setminus\mathcal O)$ is
contained in the disk bounded by $\phi(\mathcal O)$ in $\R P^2$  if and only if it
is contained in the quadrant $(+,+)$.
\end{enumerate}
\end{lemma}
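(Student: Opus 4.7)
The plan is to combine the Dessin d'enfants analysis of Lemma~\ref{lem:g>0} with Rokhlin's complex orientation formula.

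First I would establish that each $\phi(\gamma)$ is a simple closed curve bounding a disk contained in its quadrant. The adjunction bound together with Lemma~\ref{lem:g>0} shows that $\phi(C)$ has only solitary nodes as singularities, hence $\phi|_{\R C}$ is injective and the ovals $\phi(\mathcal O),\phi(\gamma_1),\dots,\phi(\gamma_g)$ are pairwise disjoint simple closed curves. Since $\phi^{-1}(L_i)\subset l_i\subset\mathcal O$, each $\gamma$ is disjoint from $\phi^{-1}\!\left(\bigcup_i\R L_i\right)$, so $\phi(\gamma)$ sits in a single quadrant $Q_\gamma$ of $\R P^2\setminus\bigcup_i\R L_i$. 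Being a simple closed curve disjoint from the non-contractible line $\R L_0$, $\phi(\gamma)$ is null-homologous in $\R P^2$ and bounds a disk $D_\gamma$; and since the non-contractible $\R L_i$'s all miss $\phi(\gamma)$, they lie in the M\"obius-band complement of $\phi(\gamma)$, which places $D_\gamma$ inside $Q_\gamma$. Consequently $D_\gamma$ is automatically disjoint from $\phi(\mathcal O)$---which meets every $\R L_i$ and so cannot be contained in any single quadrant---and from every $\phi(\gamma')$ in a different quadrant. This yields most of (1) and reduces (2) to a dichotomy determined only by $Q_\gamma$.

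The remaining statements---no same-quadrant nesting among the $\phi(\gamma)$'s, and $(+,+)$ being the special quadrant---I would extract from Rokhlin's complex orientation formula applied to $\phi(C)$, in its form valid for real $J$-holomorphic curves of even degree $d=2k$ and admitting solitary nodes (each solitary node counted as a degenerate $0$-dimensional oval). The complex orientations of $\phi(\mathcal O)$ and of each $\phi(\gamma)$ are read directly off the Dessin d'enfants $\bigcup_i\Gamma_i\subset\CC$ via the orientation of $\CC$ as a complex surface. A local inspection near each oval then assigns a definite Rokhlin sign to each candidate nested pair: the pair $(\phi(\gamma),\phi(\mathcal O))$ with $\phi(\gamma)\subset D_{\mathcal O}$ is positive exactly when $Q_\gamma=(+,+)$, and any mutual nesting between two $\phi(\gamma)$'s contributes a negative pair. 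Rokhlin's formula then forces $\Pi^+-\Pi^-=\frac{(k-1)(k-2)}{2}$, which matches precisely the number of $(+,+)$-ovals provided by Lemma~\ref{lem:g>0}. The only configuration compatible with this equality and with the sign assignments is the one asserted in (1)--(2): every $(+,+)$-oval lies inside $\phi(\mathcal O)$, no other oval does, and no two $\phi(\gamma)$'s are nested.

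The main obstacle will be the sign bookkeeping---extracting from the local picture of the Dessin on $\CC$ the Rokhlin sign of each candidate nested pair---together with verifying the precise form of Rokhlin's complex orientation formula in the $J$-holomorphic setting where solitary nodes play the role of degenerate ovals.
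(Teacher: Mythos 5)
Your overall architecture coincides with the paper's: reduce everything to Rokhlin's complex orientation formula $\Pi_+-\Pi_-=\frac{(k-1)(k-2)}{2}$, show that the only injective pairs that can contribute to $\Pi_+$ are those formed by $\phi(\mathcal O)$ together with an oval lying in the quadrant $(+,+)$, and then conclude from the count of $(+,+)$-ovals in Lemma~\ref{lem:g>0} that $\Pi_+=\frac{(k-1)(k-2)}{2}$ and $\Pi_-=0$, which is exactly statements (1) and (2). Your preliminary reduction --- each $\phi(\gamma)$ is an embedded circle bounding a disk inside its own quadrant, so only same-quadrant nesting and nesting inside $\phi(\mathcal O)$ remain to be controlled --- is correct and is left implicit in the paper. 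Handling the solitary nodes via the nodal version of Rokhlin's formula instead of first smoothing $\phi(C)$ is also legitimate; the paper smooths but points out in a footnote that the nodal formula would do.

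The gap is precisely at the step you flag as the ``main obstacle'': assigning a Rokhlin sign to each candidate pair. The complex orientation is defined on $\R C\subset C$ as a boundary orientation of a half of $C$, but the sign of an injective pair depends on how the two \emph{image} ovals sit and are oriented inside $\R P^2$; this cannot be read off ``the orientation of $\CC$ as a complex surface'' by a purely local inspection, since the Dessin only records which quadrant each oval occupies, not the planar rotation sense of its image relative to the other ovals. The paper supplies the missing propagation tool: Fiedler's orientation rule. One takes an arc $a$ of $\Gamma_0\setminus\R C$, whose closure meets $\R C$ in two ramification points $q_1,q_2$ of $\pi_0$, i.e.\ two tangencies of $\phi(C)$ with lines $D_1,D_2$ of the pencil through $p_{1,2}$; Fiedler's rule transports a compatible orientation from $\R D_1$ at $\phi(q_1)$ to $\R D_2$ at $\phi(q_2)$ along the pencil, and combined with the fact (from Lemmas~\ref{lem:g=0} and~\ref{lem:g>0}) that $\phi(q_1)$ lies in the quadrant $(\epsilon_1,\epsilon_2)$ exactly when $\phi(q_2)$ lies in $(\epsilon_1,-\epsilon_2)$, this pins down the complex orientation scheme of Figure~\ref{fig:orientation2} and hence the sign of every injective pair. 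Without this (or an equivalent) global propagation argument, your claims that the pair $(\phi(\gamma),\phi(\mathcal O))$ is positive exactly when $Q_\gamma=(+,+)$ and that any nesting of two $\phi(\gamma)$'s is negative are assertions rather than deductions.
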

\begin{proof}
These two facts will be a consequence of Rokhlin's complex orientation
formula (\cite{Rok} see also \cite{V3}).
Since there exists a smoothing  $\phi'(C')$ of $\phi(C)$ where $\phi':C\to\C
P^2$ is a real $J'$-holomorphic curve of degree $d$ and genus
$\frac{(d-1)(d-2)}{2}$, 
 we may assume\footnote{This assumption is aimed to simplify the exposition,
  and is not formally needed  for our 
  purposes. Indeed, 
there exists a generalization of Rokhlin's
  formula  for
  nodal curves that we could also have used here (\cite{Zvon83} see also
\cite{Vir96}).} from now on that $C$ has genus
$\frac{(d-1)(d-2)}{2}$. Analogously, we
 may further assume for simplicity that $\phi(C)$ 
intersects transversely  the three $J$-holomorphic
lines $L_i$.

Recall that since $C$ is maximal, the set $C\setminus \R C$ has two
connected components. Moreover the choice of one of these components
induces an orientation of $\R C$ (as  boundary). The effect of choosing the 
other component of $C\setminus \R C$ is to reverse the orientation 
of $\R C$. 
Hence there is a canonical orientation, up to a global change of orientation
of $\R C$, of all connected components of $\R C$. This orientation is called the
\emph{complex orientation of $\R C$}. 

Recall also that a disjoint pair of embedded circles in $\R P^2$ is
said to be \emph{injective} if their union bounds an annulus $A$. If
the two circles are oriented and form an injective pair, this latter
is said to be \emph{positive} if the two
orientations is induced by some orientation of  $A$, 
 and is said to be  \emph{negative} otherwise,
see
Figure~\ref{fig:orientation}a and b.
\begin{figure}[h!]
\centering
\begin{tabular}{ccccc} 
\includegraphics[height=5cm, angle=0]{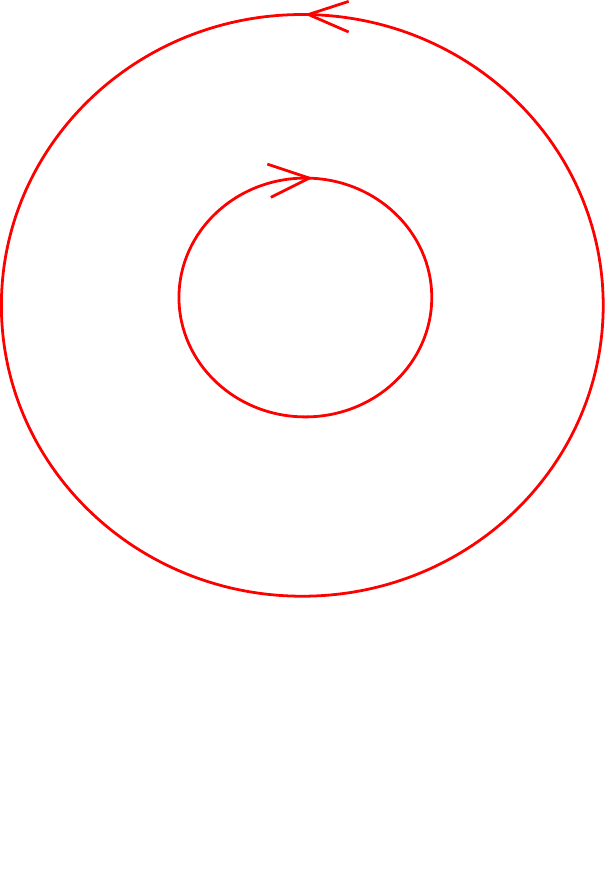} &
\hspace{3ex} &
\includegraphics[height=5cm, angle=0]{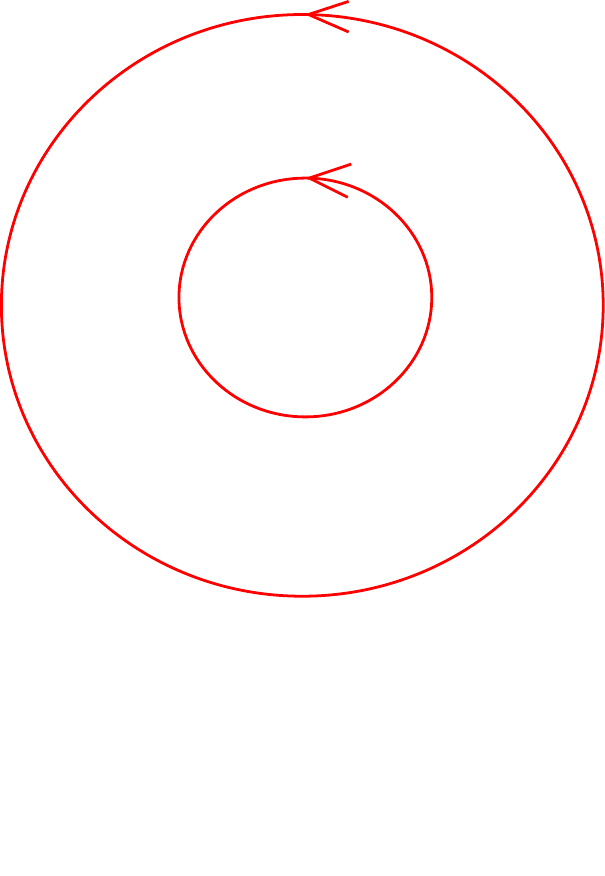} & 
\hspace{3ex} &
\includegraphics[height=7cm, angle=0]{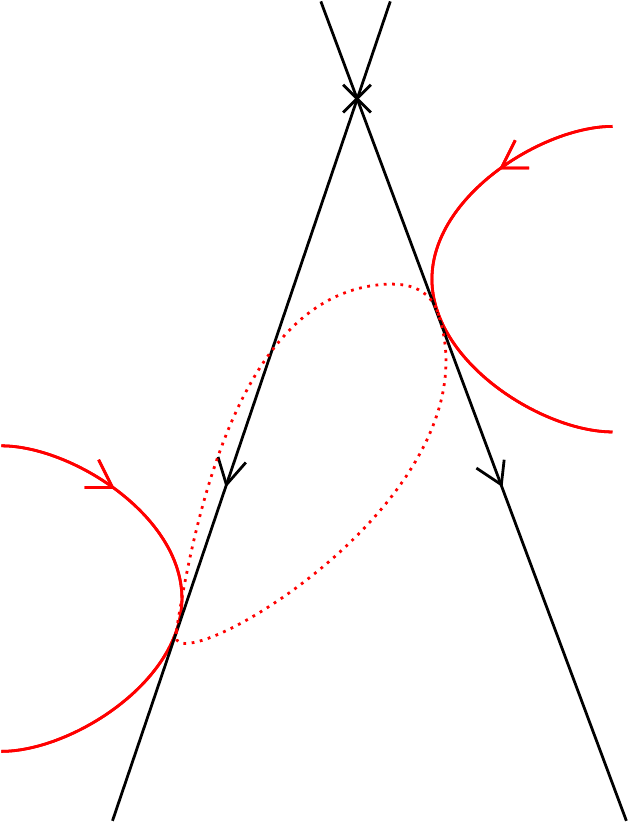}
\put(-105, 135){$\R D_1$}
\put(-90, 175){$p_{1,2}$}
\put(-0, 15){$\R D_2$}
\put(-140, 45){$\phi(q_1)$}
\put(-40, 125){$\phi(q_2)$}
\put(-70, 50){$\phi(\overline a)$}
\\ \\ a) A positive pair && b) A negative pair && c)  Fiedler's orientation rule
\end{tabular}
\caption{}
\label{fig:orientation}
\end{figure}
We denote respectively by $\Pi_+$ and $\Pi_-$ the number of positive
and negative injective pairs of connected components of $\phi(\R C)$
equipped with their complex orientation.
Rokhlin's complex orientation
formula reduces in our case to
\begin{equation}\label{equ:rokhlin}
\Pi_+-\Pi_-=\frac{(k-1)(k-2)}{2}.
\end{equation}

Now we  apply Fiedler's orientation rule (\cite{Fie2} see also
\cite{V3}) to estimate the quantities $\Pi_+$ and $\Pi_-$.
Consider the projection $\pi_0:C\to L_0$, and choose an arc $a$ of
$\Gamma_0\setminus \R C$. The arc $a$ lifts to a pair
of $conj_C$-conjugated arcs in $C$, whose topological closure in $C$,
 denoted by $\overline a$, is
homeomorphic to $S^1$. The set $\overline a\cap \R C$ consists of two
ramification points $q_1$ and $q_2$ of $\pi_0$. 
By construction, each of these two points $q_i$
corresponds to a tangency of $\phi(C)$ with a real $J$-holomorphic
line $D_i$
passing through $p_{1,2}$. 
Choose a complex orientation of $\R C$, and orient $\R D_1$ in a
compatible way with the complex orientation of $\R \phi(C)$ at $\phi(q_1)$, see
Figure~\ref{fig:orientation}c. Transport this orientation to $\R D_2$
via the portion of the pencil of $J$-holomorphic lines through
$p_{1,2}$ that intersect $\phi(\overline a)$. Fiedler's orientation
rule states that this orientation of $\R D_2$ is still compatible 
with the complex orientation of $\R \phi(C)$ at $\phi(q_2)$, see
Figure~\ref{fig:orientation}c.
\begin{figure}[h!]
\centering
\begin{tabular}{c} 
\includegraphics[height=7cm, angle=0]{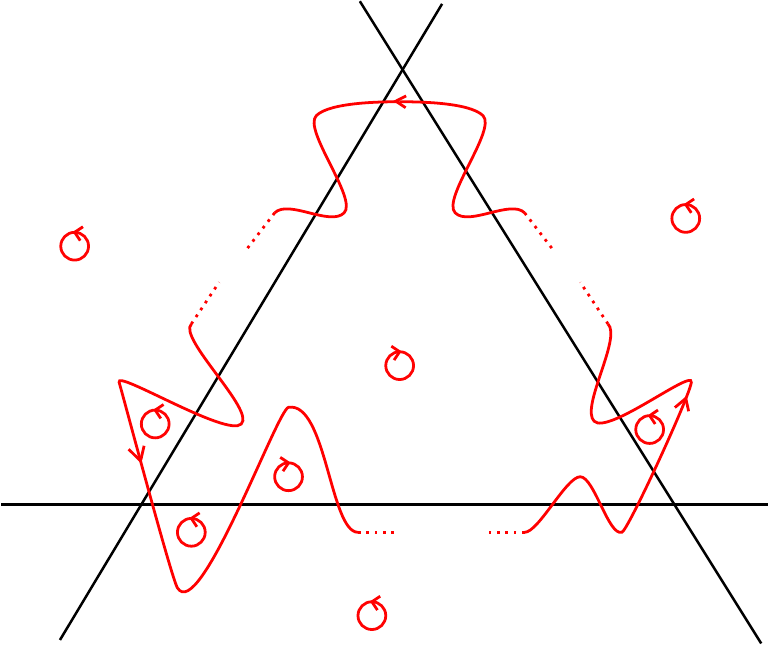}
\end{tabular}
\caption{}
\label{fig:orientation2}
\end{figure}

It follows from Lemmas~\ref{lem:g=0} and~\ref{lem:g>0}
that $\phi(q_1)$ is contained in the quadrant
$(\varepsilon_1,\varepsilon_2)$ if and only if 
$\phi(q_2)$ is contained in the quadrant
$(\varepsilon_1,-\varepsilon_2)$, see Figures~\ref{fig:disk2} and
\ref{fig:g>0}.
Hence Fiedler's orientation
rule implies that the complex
orientation of the curve $\phi(C)$ is as depicted in
Figure~\ref{fig:orientation2}.
In particular if $\gamma_1$ and $\gamma_2$ are two distinct connected
components of $\phi(\R C)$ which form an injective pair, we see that
this pair contributes to $\Pi_+$ if and only if
$\gamma_i=\phi(\mathcal O)$ and $\gamma_{3-i}$ is in the quadrant
$(+,+)$. Hence we deduce from Lemma~\ref{lem:g>0} 
that
$$\Pi_+\le   \frac{(k-1)(k-2)}{2}\quad\mbox{and}\quad 
\Pi_-\ge 0,$$
with equality if and only if the conclusion of the lemma holds.
Now the result follows from
Equation~$(\ref{equ:rokhlin})$.
\end{proof}

\begin{rem}
It is proved in \cite{Mik11} that the index map defined in \cite{Pass1}
provides a pairing between  
 connected components of 
$\R \phi(C\setminus \mathcal O)$ and  points with integer
coordinates in the interior of the triangle $\Delta_d$ with vertices
$(0,0)$, $(d,0)$, and $(0,d)$. 
It is interesting  that this pairing is also visible from the arrangements
$\Gamma_0\cup\Gamma_1\cup\Gamma_2$, see Figures~\ref{fig:disk2} and
\ref{fig:g>0}. In addition to the pairing, a triangulation of
$\Delta_d$ (dual to a honeycomb tropical
curve)
is also visible
in these pictures. I do not know whether this subdivision has any
interpretation.
 \end{rem}

\appendix
\section{}
As   consequences of Theorem~\ref{thm:harnack},  
we generalize to  simple $J$-holomorphic Harnack
curves some facts that are well known for 
simple algebraic Harnack
curves. 
%Since this paper
%deals with $J$-holomorphic curves, we provide statements and proofs in
%this framework. However

\subsection{Topological types of simple Harnack curves}\label{app:all harnack}
Here we deduce from Theorem~\ref{thm:pseudoharnack}
all topological types of pairs 
$\left(\R
P^2, \R \phi(C)\bigcup \cup_{i=0}^2\R L_i\right)$, where $\phi:C\to \C
P^2$ is
 a simple Harnack
curve.

\begin{prop}
Let $\phi:C\to \C
P^2$ be
 a simple $J$-holomorphic Harnack
curve of degree $d$. 
Then the topological type of the pair 
$\left(\R
P^2, \R \phi(C)\bigcup \cup_{i=0}^2\R L_i\right)$ is obtained from
Figure~\ref{fig:harnack} by performing finitely many of the two
 following operations :
\begin{itemize}
\item the contraction of a circle disjoint from $\cup_{i=0}^2\R L_i$
  to a point, see Figure~\ref{fig:degeneration}a;
\item the replacement of $u_j$ consecutive intersection points with
  $\R L_i$ by a  point with of order of contact $u_j$, see Figure~\ref{fig:degeneration}b.
\begin{figure}[h!]
\centering
\begin{tabular}{ccc} 
\includegraphics[height=2.5cm, angle=0]{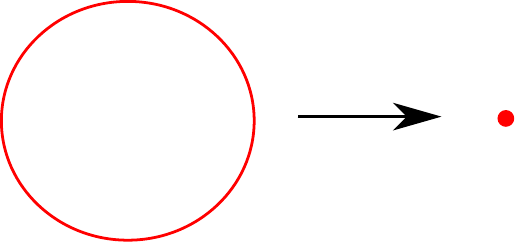}&
\hspace{18ex} &
\includegraphics[height=2.5cm, angle=0]{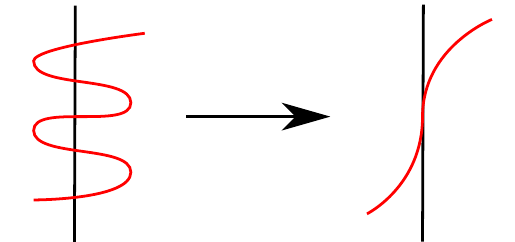}
\\ \\ a) && b)
\end{tabular}
\caption{}
\label{fig:degeneration}
\end{figure}

\end{itemize}

Conversely, any such topological type is realized by a $J$-holomorphic Harnack
curve of degree $d$.
%, for some real almost complex structure $J$.
\end{prop}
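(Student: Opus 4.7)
My plan is to reduce both directions to the case of transverse, maximal-genus Harnack curves, which is precisely the setting of Theorem~\ref{thm:pseudoharnack} and Figure~\ref{fig:harnack}, and then to handle the converse via the Kenyon--Okounkov construction together with local degenerations.

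For the forward direction, let $\phi:C\to\C P^2$ be a simple $J$-holomorphic Harnack curve of degree $d$, genus $g$ and intersection sequence $s$. The Dessins d'Enfants analysis of Section~\ref{sec:algebraic} (in particular Lemmas~\ref{lem:g=0} and~\ref{lem:g>0}) shows that $\phi(C)$ has exactly $\delta=\tfrac{(d-1)(d-2)}{2}-g$ solitary nodes; since $\phi^{-1}(\R L_i)\subset l_i\subset \mathcal O$, none of these nodes lies on $\bigcup_{i=0}^2\R L_i$. Moreover each entry of $s_i$ equals the order of contact of $\phi(\R C)$ with $\R L_i$ at the corresponding intersection point. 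I would then proceed in two steps. First, at every contact point of order $u>1$, perturb locally so as to replace the tangency by $u$ transverse intersections; this is the inverse of operation~(b), and the outcome has the topological type of a simple Harnack curve of degree $d$, genus $g$, and transverse sequence $(1,\ldots,1)$. Second, near each solitary node replace the pair of complex conjugate branches by a small oval in $\R P^2$ disjoint from $\bigcup \R L_i$ (the inverse of operation~(a)); the outcome has the topological type of a simple Harnack curve of degree $d$, maximal genus, and transverse intersection sequence. By Theorem~\ref{thm:pseudoharnack} this last topological type is the one of Figure~\ref{fig:harnack}, so reading the two steps in reverse exhibits the original topological type as obtained from Figure~\ref{fig:harnack} by finitely many operations~(a) and~(b).

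For the converse, I would invoke the Kenyon--Okounkov construction~\cite{KenOko06}, which provides an algebraic simple Harnack curve of any prescribed $(d,g,s)$. Any topological type obtained from Figure~\ref{fig:harnack} by operations~(a) and~(b) corresponds to a specific triple $(d,g,s)$ and a specific combinatorial pattern (which ovals are collapsed, and which consecutive intersections are merged). To realize this pattern, one argues by a continuity and connectedness argument inside the real Kenyon--Okounkov moduli space: deforming the real parameters one can collapse exactly the prescribed ovals and merge exactly the prescribed intersection points. Since any algebraic curve is $J$-holomorphic for the standard complex structure, this yields the desired realization. The main obstacle is precisely this matching of combinatorial patterns: distinct choices of ovals to collapse could a priori lead to distinct topological types sharing the same $(d,g,s)$, and one must verify that each such choice corresponds to a non-empty stratum of the Harnack moduli space and can be reached from Figure~\ref{fig:harnack} by a real degeneration.
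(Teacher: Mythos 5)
Your forward direction is essentially the argument of the paper: you undo the tangencies and the solitary nodes by a perturbation and then invoke Theorem~\ref{thm:pseudoharnack}. The only caveat is that a local modification of the \emph{picture} is not by itself the real locus of a Harnack curve, so the step ``the outcome has the topological type of a simple Harnack curve of degree $d$, genus $g$, and transverse sequence'' must be backed by the existence of an actual global smoothing $\phi':C'\to\C P^2$ --- a $J'$-holomorphic simple Harnack curve of degree $d$ and genus $\frac{(d-1)(d-2)}{2}$ whose image is a smoothing of $\phi(C)$ and meets perturbed lines $L_i'$ transversely. This is exactly what the paper asserts (and already uses in the proof of the Rokhlin lemma), so this half is fine, if slightly under-justified at that one point.

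The converse, however, has a genuine gap, and you have named it yourself: realizing every triple $(d,g,s)$ via Kenyon--Okounkov is not enough, because for $0<g<\frac{(d-1)(d-2)}{2}$ the topological type also records \emph{which} ovals of Figure~\ref{fig:harnack} have been contracted (the resulting solitary nodes sit in prescribed quadrants) and \emph{which} consecutive intersection points have been merged; the ``continuity and connectedness argument inside the real Kenyon--Okounkov moduli space'' that would produce each such combinatorial pattern is precisely the content of the statement and is left unproved. The paper sidesteps this entirely by going to the opposite extreme: it exhibits the single maximally degenerate curve $[x:y]\mapsto[x^d:y^d:(x-y)^d]$, which by Theorem~\ref{thm:pseudoharnack} is a rational Harnack curve meeting each $L_i$ in one point of contact order $d$ (i.e.\ the type obtained from Figure~\ref{fig:harnack} by performing \emph{all} operations), and then recovers every intermediate type by partially and independently smoothing its solitary nodes into small ovals and separating its tangency points --- the same smoothing existence already invoked in the forward direction. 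If you want to keep your route, you would need to prove that each pattern of collapsed ovals corresponds to a non-empty stratum of the Kenyon--Okounkov space (e.g.\ via their parametrization by the areas of the holes of the amoeba); otherwise I would recommend replacing your converse by the explicit degenerate curve plus independent smoothings.
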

\begin{proof}
Indeed, let $\phi':C'\to \C P^2$ be 
 a simple $J'$-holomorphic  Harnack
curve of degree $d$ and genus $\frac{(d-1)(d-2)}{2}$ 
such that $\phi'(C')$ is a smoothing of $\phi(C)$,
and $\phi'(C')$ intersects transversely a $J'$-holomorphic
perturbation $L'_i$ of  $L_i$ for $i=0,1,2$. According the proof
of Theorem~\ref{thm:pseudoharnack}, the topological type of the pair 
$\left(\R
P^2, \R \phi'(C')\bigcup \cup_{i=0}^2\R L'_i\right)$ is given
Figure~\ref{fig:harnack}. This proves that the topological type of the pair 
$\left(\R
P^2, \R \phi(C)\bigcup \cup_{i=0}^2\R L_i\right)$ is as stated in the
proposition.

Analogously, to prove the second statement,
it is enough to exhibit a rational  Harnack
curve of degree $d$ intersecting each lines $L_i$ in a single point
 of order of contact $d$. According to Theorem~\ref{thm:pseudoharnack}, the map
$$\begin{array}{cccc}
\phi: & \C P^1 &\longrightarrow & \C P^2
\\ & [x:y]&\longmapsto & [x^d:y^d: (x-y)^d]
\end{array} $$
is  such a rational  Harnack
curve.
\end{proof}

\subsection{Simple Harnack curves in other toric
  surfaces}\label{app:other toric}
Here we deduce the classification of simple Harnack curves in any
toric surface from the classification of simple Harnack curves in $\C
P^2$.
Theorem~\ref{thm:pseudoharnack2} below 
can be proved along the same lines as Theorem~\ref{thm:pseudoharnack}. The reason why we restricted to 
 $\C P^2$ in Theorem~\ref{thm:pseudoharnack} is that,  
 thanks to symmetries, the proof 
in this particular case is
much more transparent and
avoids purely technical complications.
Furthermore  
Theorem~\ref{thm:pseudoharnack2}
 can be deduced from Theorem~\ref{thm:pseudoharnack} thanks to
Viro's patchworking. We briefly indicate below how to perform this reduction. We refer to
 \cite{V1,V4,Sh3} for references to patchworking, 
and \cite{IS} for its $J$-holomorphic version.

%Our proof is in term of real Dessin d'Enfants, however it can be
%rephrased in the more sophisticated framework of Viro's patchworking,
% see \cite{V1,V4} and also \cite{IS} for its $J$-holomorphic version.

\medskip
Let $\Delta\subset \R^2$ be a convex polygon with vertices in $\Z^2$,
and let $X_\Delta$ be the complex algebraic toric surface associated to
$\Delta$, see \cite{GKZ}. 
The complement of the maximal toric orbit of $X_\Delta$ is denoted by $\partial
X_\Delta$, and is called the \emph{toric boundary} of $X_\Delta$.
There is a natural correspondence $e\leftrightarrow X_e$ between edges 
of $\Delta$ and irreducible
components  of $\partial X_\Delta$, which satisfies $e\cap
e'\ne\emptyset$ if and only if $ X_e\cap X_{e'}\ne\emptyset$.
Note that $X_\Delta$ might have isolated
singularities located at intersections $ X_e\cap X_{e'}$ 
of irreducible components of
$\partial X_\Delta$. Recall that $\Delta$ induces an embedding of
$X_\Delta$ into some projective space $\C P^N$, and we equip $X_\Delta$
with the restriction, still denoted by $\omega_{FS}$,
 of the corresponding Fubini-Study symplectic
form.
An almost complex structure $J$ on $X_\Delta$ 
tamed by $\omega_{FS}$ is said to be
\emph{compatible} if it coincides with the toric complex structure on
$X_\Delta$ in a neighborhood of $\partial X_\Delta$, and \emph{real}
if the standard complex conjugation on $(\C^*)^2=X_\Delta\setminus
\partial X_\Delta$ is $J$-antiholomorphic.

Let $(C,\omega)$ be a compact symplectic surface equipped with a 
complex structure $J_C$ tamed by $\omega$, and  a
$J_C$-antiholomorphic involution $conj_C$, and
let
 $J$ be a real  compatible almost complex structure on $X_\Delta$.
A real  $J$-holomorphic map 
 $\phi:C\to X_\Delta$ is said to have degree $\Delta$ if 
$\phi_*([C])$ is equal, in $H_2(X_\Delta;\Z)$, to the class realized by a
hyperplane section of $X_\Delta$ for the embedding induced by $\Delta$.
By the adjunction formula, a  $J$-holomorphic map 
 $\phi:C\to X_\Delta$ of degree $\Delta$ which does not factorize
through a non-trivial ramified covering has genus at most the number
of integer points in the interior $\overset{\circ}{\Delta}$ of $\Delta$.
Furthermore $ \phi(C)$ is non-singular in case of equality.

\begin{defi}\label{def:delta}
Let $\Delta\subset \R^2$ be a convex polygon with vertices in $\Z^2$,
and let $[e_1,\ldots,e_k]$ be the natural cyclic ordering on the edges of
$\Delta$.
%induced by any orientation of $\partial \Delta$.
A \emph{simple Harnack curve} of degree $\Delta$ 
is  a real  $J$-holomorphic map 
$\phi:C\to X_\Delta$ of degree $\Delta$, for some real  compatible almost complex structure
$J$ on $X_\Delta$,
satisfying the three following conditions:
\begin{itemize}
\item $C$ is a non-singular maximal real  curve;
\item there exist a connected component 
 $\mathcal O$ of $\R C$, and $k$ disjoint
  arcs $l_1,\ldots,l_k$
 contained in $\mathcal O$ such that $\phi^{-1}(X_{e_i})\subset l_i$;
\item the cyclic orientation  on the arcs $l_i$
induced by 
%any orientation of 
$\mathcal  O$ is precisely $[l_1,\ldots,l_k]$.
\end{itemize}
\end{defi}
Note that the last condition is non-empty only when $k\ge 4$.

\begin{exa}
For $\Delta_d$ the triangle with vertices $(0,0)$, $(d,0)$,
and $(0,d)$, the surface $X_{\Delta_d}$ is the projective plane
equipped with a homogeneous coordinate system, and $\partial X_{\Delta_d}$ is the
union of the three coordinate lines. A simple Harnack curve 
of degree $\Delta_d$ is a simple Harnack curve of degree $d$ in the
sense of Section~\ref{sec:intro}. Note however that a $J$-holomorphic 
simple Harnack
curve of degree $d$ might not be a simple Harnack curve
of degree $\Delta_d$, since $J$ is not required to be
integrable in a neighborhood of the coordinate axis. This additional
requirement is necessary when one wants to consider more general toric surfaces.
\end{exa}

As in Section~\ref{sec:intro}, given  $\phi:C\to X_\Delta$  a simple Harnack
curve,
we encode in a sequence the intersections of
$\phi(\mathcal O)$ with the components of $\partial X_\Delta$. 
The choice of  an orientation of $\mathcal O$ induces an ordering of 
the intersection points  of $\mathcal O$  with  $X_{e_i}$, and 
we
denote by $s_i$ the corresponding sequence of
intersection multiplicities. Let $s$ be the sequence $(s_1,\ldots ,s_k)$
considered up to the equivalence relation generated by
$$
(s_1,\ldots, s_k)\sim (\overline s_1,\ldots, \overline s_k),
\quad(s_1,\ldots, s_k)\sim (s_k,s_1,\ldots,s_{k-1}),$$
and
$$(s_1,\ldots ,s_k)\sim 
%( s_0,\overline s_2,\overline s_1),$$
(s_k, s_{k-1},\ldots, s_1).$$
Recall that $\overline{(u_i)_{1\le i\le n}}=(u_{n-i})_{1\le i\le n}$.
%This equivalence relation is so that 
%  $s$ does not depend on the chosen orientation on
%$\mathcal O$, nor on the orientation chosen on $\partial \Delta$.

%% Next theorem can be proved along the same lines than Theorem 
%% \ref{thm:pseudoharnack}. The reason why we restricted to 
%%  $\C P^2$ in Theorem 
%% \ref{thm:pseudoharnack} is that the proof 
%% in this particular case is
%% much more transparent  
%%  thanks to symmetries, and
%% avoids purely technical complications.
%% Furthermore it turns out that Theorem \ref{thm:pseudoharnack2}
%%  can be deduced from Theorem \ref{thm:pseudoharnack}, which is the proof
%%  we give in what follows.

\begin{thm}\label{thm:pseudoharnack2}
Let $\Delta\subset \R^2$ be a convex polygon with vertices in $\Z^2$,
and let $\phi:C\to X_\Delta$ be a simple Harnack curve of degree $\Delta$. Then 
the curve $\phi(C)$ 
has  solitary nodes 
 as only singularities (if any).
Moreover if either $g(C)=0$ or $g(C)=|\Z^2\cap \overset{\circ}{\Delta}|$, then
the topological type of the
pair $\left((\R^*)^2, \R \phi(C)\cap (\R^*)^2\right)$ only
depends on $\Delta$, 
$g(C)$, and $s$.

\end{thm}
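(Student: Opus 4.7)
The plan is to reduce Theorem~\ref{thm:pseudoharnack2} to Theorem~\ref{thm:pseudoharnack} via Viro's $J$-holomorphic patchworking, as the author indicates. Given a simple $J$-holomorphic Harnack curve $\phi:C\to X_\Delta$ of degree $\Delta$, I would first embed $\Delta$ inside a large triangle $\Delta_d$ (after translating $\Delta$ into the positive quadrant, for instance) and extend the decomposition $\Delta_d=\Delta\cup\overline{\Delta_d\setminus \Delta}$ to a convex polyhedral subdivision of $\Delta_d$ whose remaining cells are primitive unimodular triangles. Each such triangle $T$ corresponds to a smooth toric surface on which I would place a standard simple Harnack curve of degree $T$ whose intersection with the shared toric divisor matches the corresponding data of $\phi(C)$.

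Next, I would invoke the pseudoholomorphic patchworking construction of \cite{IS} to glue $\phi$ with these standard pieces, producing a simple $J'$-holomorphic Harnack curve $\phi':C'\to \C P^2$ of degree $d$. The combinatorial data of $\phi'$ along the three coordinate lines is computed from $s$ together with the combinatorial data of the standard pieces, so applying Theorem~\ref{thm:pseudoharnack} pins down the topological type of the pair $\left(\R P^2,\R\phi'(C')\bigcup_{i=0}^2\R L_i\right)$. The topological type of $\left((\R^*)^2, \R\phi(C)\cap (\R^*)^2\right)$ is then recovered by restricting attention to the subregion of $\R P^2$ corresponding to $\Delta$ under the patchwork. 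The statement about solitary nodes follows from the same adjunction-formula bound on the number of triple points of $\bigcup_i\Gamma_i$ that is used in the proof of Theorem~\ref{thm:pseudoharnack}, where now the bound reads $|\Z^2\cap \overset{\circ}{\Delta}| - g(C)$.

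The main obstacle will be to run the patchworking carefully in the $J$-holomorphic setting: one must ensure that the real compatible almost complex structure $J$ on $X_\Delta$ can be extended to a real compatible structure on $X_{\Delta_d}$ for which the standard Harnack pieces remain $J'$-holomorphic, and that the gluings along toric divisors are tangency-compatible so that the resulting curve is actually a simple Harnack curve (rather than merely a topologically plausible candidate). This is precisely the ``technical complication'' the author refers to when restricting the detailed proof to $\C P^2$. As an alternative strategy, one could reproduce the proof of Theorem~\ref{thm:pseudoharnack} directly in $X_\Delta$: introduce, for each edge $e_i$ of $\Delta$, a ramified covering $\pi_i:C\to X_{e_i}\cong \C P^1$ built from an appropriate pencil of $J$-holomorphic rational curves through the torus-fixed points opposite to $e_i$, construct the dessins d'enfants $\Gamma_i\subset \widetilde C$, count real components via the Riemann--Hurwitz equality, and conclude with Rokhlin's complex orientation formula exactly as in Section~2.
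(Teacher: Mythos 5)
Your overall strategy coincides with the paper's: reduce to Theorem~\ref{thm:pseudoharnack} by patchworking $\phi(C)$ with standard algebraic simple Harnack pieces (the paper uses Itenberg's curves from \cite{IV2} and the scheme of \cite[Theorem 2(1)]{KhRiSh01}) to produce a simple Harnack curve in $\C P^2$. But you have left a genuine gap exactly where the paper does its only real work. You flag, as a ``main obstacle,'' that the gluings along the toric divisors must be compatible so that the output is actually a simple Harnack curve, yet you neither state precisely what this compatibility is nor prove it. The precise condition is the following claim: for every edge $e$ of $\Delta$, the cyclic order on the finite set $\mathcal O\cap \R X_e$ induced by the oval $\mathcal O$ agrees with the cyclic order induced by $\R X_e$. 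Without this, the intersection pattern of $\phi(C)$ with $\R X_e$ need not match the pattern of the standard Harnack piece on the other side of the divisor, and the patchworked curve would fail to have all its intersections with the toric boundary on disjoint arcs of a single oval in the required order. This is not part of the definition of a simple Harnack curve of degree $\Delta$ (the definition only constrains which arc contains $\phi^{-1}(X_{e_i})$, not the order of the points within that arc relative to $\R X_{e_i}$), so it genuinely has to be proved.

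The paper proves the claim by exploiting the hypothesis that $J$ is \emph{compatible}, i.e.\ integrable near $\partial X_\Delta$: one compactifies $(\C^*)^2\cup \overset{\circ}{X_e}$ into a $J$-holomorphic $\C P^2=(\C^*)^2\cup L_0\cup L_1\cup L_2$ with $L_0\supset \overset{\circ}{X_e}$, so that $\phi$ induces a $J$-holomorphic map $\phi':C\to\C P^2$; the Riemann--Hurwitz count from Section~\ref{sec:algebraic} then shows that the projection $\pi_0:C\to L_0$ has no ramification point on the connected component of $\mathcal O\setminus\phi'^{-1}(L_1\cup L_2)$ containing $\phi'^{-1}(L_0)$, which is exactly the statement that the two cyclic orders agree. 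Note that this is also the place where the distinction you glossed over matters: a $J$-holomorphic simple Harnack curve in $\C P^2$ in the sense of Section~\ref{sec:intro} need not be one of degree $\Delta_d$ in the sense of Definition~\ref{def:delta}, precisely because integrability near the boundary is what allows the local compactification argument. Your alternative suggestion (redoing the dessins d'enfants argument directly in $X_\Delta$) is indeed viable and is acknowledged by the paper, but as written your primary route is incomplete until you supply the cyclic-order claim and its proof.
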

\begin{proof}
Let us assume for simplicity that $\phi(C)$ intersects $\partial
X_{\Delta}$ transversely,
%% First note that for any convex polygon $\Delta'\subset \R^2$ with
%% vertices in $\Z^2$, there exists a simple algebraic Harnack curve in
%% $X_{\Delta'}$ realizing any given intersection sequence $s'$ with $\partial
%% X_{\Delta'}$. For transverse intersections, these curves are
%% constructed in \cite{IV2}. This construction easily generalizes to any
%% intersection sequence.
and suppose for a moment that we have proved the following:

\medskip
{\bf Claim: }for any edge  $e$  of $\Delta$, 
the cyclic  orders on the finite set $\mathcal O\cap \R X_{e}$ induced
by  $\mathcal O$ and $\R X_{e}$ coincide.

\medskip

Assuming this claim,
 one constructs exactly as in the proof of \cite[Theorem 2(1)]{KhRiSh01} a simple Harnack
curve in $\C P^2$ by patchworking  $\phi(C)$
with finitely many  simple
algebraic Harnack curves constructed in \cite{IV2}. Now Theorem~\ref{thm:pseudoharnack2} follows from Theorem~\ref{thm:pseudoharnack}.

 \medskip
Hence we are left to prove the claim. Let $e$ be an edge of 
$\Delta$, and define $\overset{\circ}{X_{e}}$ to be $X_{e}$ from which we
remove  its
two intersection points with the other irreducible components of 
$\partial X_\Delta$.
Since the almost complex structure on $\Delta$ is integrable
in a neighborhood of $\partial X_\Delta$, there exists a
$J$-holomorphic compactification of $(\C^*)^2\cup \overset{\circ}{X_{e}}$ into 
$\C P^2=(\C^*)^2\cup L_0\cup L_1\cup L_2 $ where  $L_i$ is a
$J$-holomorphic line in $\C P^2$, and $L_0$ is a compactification of 
$\overset{\circ}{X_{e}}$.
The map $\phi$ induces a $J$-holomorphic map $\phi':C\to \C P^2$, and
exactly as in the beginning of Section~\ref{sec:algebraic}, one  proves that
the map $\pi_0:C\to L_0$ has no ramification point  on the connected
component  of
$\mathcal O\setminus \phi'^{-1}(L_1\cup L_2)$ containing  
$\phi'^{-1}(L_0)$. This is precisely saying that the cyclic  orders on the set
 $\mathcal O\cap \R X_e$ induced
by  $\mathcal O$ and $\R X_e$ coincide.
\end{proof}

\bibliographystyle {alpha}
\bibliography {../../Biblio.bib}

\end{document}